\documentclass[11pt]{article}

\usepackage[T1]{fontenc}
\usepackage[utf8]{inputenc}
\usepackage{lmodern}
\usepackage{microtype}
\usepackage{geometry}
\usepackage{amsmath,amssymb,amsthm,mathtools}
\usepackage{xcolor}
\usepackage{enumerate}
\usepackage{hyperref}
\hypersetup{
  colorlinks=true,
  linkcolor=blue!55!black,
  citecolor=green!35!black,
  urlcolor=blue!65!black,
  pdftitle={A Lovász Theta Parameter and Theta Body for Signed Graphs},
  pdfauthor={Gabriel Coutinho}
}

\newtheorem{theorem}{Theorem}[section]
\newtheorem{lemma}[theorem]{Lemma}
\newtheorem{proposition}[theorem]{Proposition}
\newtheorem{corollary}[theorem]{Corollary}

\newtheorem{problem}[theorem]{Problem}

\theoremstyle{definition}
\newtheorem{definition}[theorem]{Definition}
\newtheorem{example}[theorem]{Example}
\theoremstyle{remark}

\newcommand{\R}{\mathbb{R}}

\newcommand{\one}{\mathbf{1}}
\newcommand{\ip}[2]{\left\langle #1,#2\right\rangle}
\newcommand{\norm}[1]{\left\lVert #1\right\rVert}

\newcommand{\cB}{\mathcal{B}}
\newcommand{\cS}{\mathcal{S}}
\newcommand{\cH}{\mathcal{H}}
\newcommand{\conv}{\operatorname{conv}}
\newcommand{\diag}{\operatorname{diag}}
\newcommand{\Diag}{\operatorname{Diag}}
\newcommand{\Tr}{\operatorname{Tr}}
\newcommand{\tbar}{\overline{\vartheta}}
\newcommand{\tb}{\overline{\vartheta}_{\mathrm b}}
\newcommand{\chib}{\chi_{\mathrm b}}
\newcommand{\chifb}{\chi_{\mathrm{fb}}}
\newcommand{\betab}{\beta_{\mathrm b}}
\newcommand{\STAB}{\operatorname{STAB}}
\newcommand{\THbody}{\operatorname{TH}}
\newcommand{\QSTAB}{\operatorname{QSTAB}}
\newcommand{\BSTAB}{\operatorname{BSTAB}}
\newcommand{\BTH}{\operatorname{BTH}}
\newcommand{\BQSTAB}{\operatorname{BQSTAB}}
\newcommand{\BIP}{\operatorname{BIP}}
\newcommand{\cart}{\mathbin{\Box}}

\title{A Lovász Theta Parameter and Theta Body for Signed Graphs}
\author{Gabriel Coutinho\thanks{Department of Computer Science at the Federal University of Minas Gerais, Brazil. Email: gabriel@dcc.ufmg.br.}}
\date{\today}

\begin{document}

\maketitle

\begin{abstract}
We introduce a Lov\'asz-type semidefinite parameter for balanced colouring of signed graphs. Its homomorphism target is a unit sphere equipped with an orthogonal involution: the fixed and anti-fixed components play different roles, while applying the involution to one endpoint realizes switching. The resulting parameter admits a symmetric formulation with two positive semidefinite matrices and an equally symmetric dual. It is also one half of the strict vector chromatic number of the ordinary graph formed by the negative edges of the double switching graph. Our second main contribution is a hierarchy of convex corners. Starting from the balanced induced subgraph polytope, we define signed analogues of the stable-set, theta, and clique-inequalities relaxations in the original vertex space. The signed theta body has an intrinsic two-matrix description, its all-ones gauge is the new scalar parameter, and the ordinary stable-set hierarchy is recovered exactly from signed digon graphs. For all-negative signatures, the construction becomes a relaxation of the maximum induced bipartite subgraph problem and is related to the generalized theta number. Finally, we propose a notion of balanced perfectness and show that it is strictly weaker than perfectness of the associated double-cover graph.
\end{abstract}

\medskip
\noindent\textbf{Keywords.}
Signed graph; graph homomorphism; Lov\'asz theta function; semidefinite
programming; balanced colouring; balanced induced subgraph; convex corner.

\medskip
\noindent\textbf{2020 Mathematics Subject Classification.}
05C15, 05C22, 05C50, 90C22.


\section{Introduction and motivation}

The Lov\'asz theta function occupies a distinguished position between graph colouring, semidefinite optimization, and geometric representation of graphs. On the chromatic side, the number $\tbar(G):=\vartheta(\overline G)$ admits a homomorphic definition: it is the least $q \in \mathbb{R}$ so that vertices of $G$ can be represented by unit vectors in Euclidean space with adjacent vertices yielding a prescribed inner product $1/(1-q)$. This viewpoint is classical; see Lov\'asz~\cite{LovaszShannon}, Knuth~\cite{KnuthSandwich}, Karger--Motwani--Sudan~\cite{KargerMotwaniSudan}, and Godsil et al.~\cite{godsil2019graph}.

Our first purpose is to develop an analogous construction for \emph{balanced colouring} of a signed graph. A balanced colour class need not be stable: it may contain many edges, provided that the induced signed graph has no negative cycle. Balanced colouring has recently been studied through signed homomorphisms, Hadwiger-type questions, fractional colourings, and signed analogues of Kneser, Schrijver, and Borsuk graphs; see \cite{JimenezEtAl,HuEtAlFractional,KuffnerEtAl,KuffnerEtAlFractionalColoring}. The polyhedral optimization problem underlying one colour class is older: the convex hull of balanced induced vertex sets was introduced and studied by Barahona and Mahjoub~\cite{BarahonaMahjoub1989,BarahonaMahjoub1994}.

A signed spherical target must satisfy two compatibility requirements. First, switching at a vertex reverses the sign of every incident edge. Consequently, the target should carry an operation that, when applied to the image of one endpoint, interchanges the positive- and negative-edge relations. An orthogonal involution $R$ provides precisely such an operation. Indeed, we shall propose that negative and positive adjacency between $u$ and $v$ are encoded respectively by
\[
  \langle x_u,x_v\rangle=\gamma
  \qquad\text{and}\qquad
  \langle x_u,R x_v\rangle=\gamma,
\]
then replacing $x$ by $Rx$ exchanges the two conditions, because
\[
  \langle Rx,y\rangle=\langle x,Ry\rangle
  \quad\text{and}\quad
  \langle Rx,Ry\rangle=\langle x,y\rangle.
\]
Thus the action $x\mapsto Rx$ geometrically realizes switching at a vertex.

Second, the homomorphism formulation of balanced colouring naturally uses targets with a positive loop at every vertex. In the spherical model this is compatible to every point $x$ of the target satisfying the positive adjacency condition to itself, that is,
\[
  \langle x,Rx\rangle=\gamma.
\]
The ordinary antipodal map $R=-I$ is too restrictive, since then $\langle x,Rx\rangle=-1$ for every unit vector $x$, hence permitting only $\gamma=-1$. To obtain a nontrivial family for $\gamma_t = \frac{1}{(1-2t)}$ and $t > 1$, we instead allow $R$ to have both fixed and anti-fixed subspaces, and this is the key step in our construction. 

Writing $x=x_++x_-$ according to this decomposition gives
\[
  \langle x,Rx\rangle=\|x_+\|^2-\|x_-\|^2,
\]
so the required value $\gamma_t$ can be realized by restricting the unit sphere to a suitable latitude.

\begin{quote} \itshape
  Given a signed graph $\Sigma$, we will define $\tb(\Sigma)$ to be the least $t\geq1$ for which there is a signed homomorphism from $\Sigma$ to the graph $\cS_t(\cH,R)$, where $\cH$ is a finite-dimensional Euclidean space, $R$ is an orthogonal involution, the vertex set of $\cS_t(\cH,R)$ is
  \[ \{x \in \cH : \|x\| = 1, \langle x, R x \rangle = \gamma_t\}, \]
  and the adjacency relations are defined by
  \begin{align*}
  x\sim_-y
  &\quad\Longleftrightarrow\quad
  \ip{x}{y}=\gamma_t,\\
  x\sim_+y
  &\quad\Longleftrightarrow\quad
  \ip{x}{Ry}=\gamma_t.
\end{align*}
\end{quote}

The parameter $\tb(\Sigma)$ has a particularly clean semidefinite formulation. It is the minimum $t$ for which there are two positive semidefinite matrices $X,Y$, both of order $|V(\Sigma)|$, with constant diagonals $t-1$ and $t$, and with the signed edge constraints 
\[
  X_{uv}-\sigma(uv)Y_{uv}=-1.
\]
We also derive its standard dual in two symmetric matrix variables. These programs are directly obtained from the two components of the spherical representation.

While our definition is obtained exclusively from directly examining the signed graph, we also show that it admits a purely ordinary graph interpretation, similarly to other parameters for signed graphs. Let $C(\Sigma)$ be the ordinary graph formed by the negative edges of the double switching graph, after positive loops are adjoined. Its vertices are $V(\Sigma) \times \{+,-\}$, which may be referred to as local switching states at $v$, that is $(v,+)$ and $(v,-)$, and two states are adjacent precisely when they cannot occur together in one balanced colour class. We prove
\[
  \tb(\Sigma)=\frac12\tbar(C(\Sigma)).
\]

Our second main point is the introduction of the convex body $\BTH(\Sigma)$, which plays the role of the theta body for balanced induced subgraphs. We show that it is a natural projection of $\THbody(C(\Sigma))$ onto the original vertex space, and that it is contained between the balanced induced subgraph polytope $\BSTAB(\Sigma)$ and the projected clique relaxation $\BQSTAB(\Sigma)$, which we also define. In other words, we show that the following signed analogue of the classical sandwich theorem holds:
\[
  \BSTAB(\Sigma)\subseteq\BTH(\Sigma)\subseteq\BQSTAB(\Sigma).
\]
The body $\BTH(\Sigma)$ has an intrinsic two-Gram semidefinite description; its all-ones gauge is exactly $\tb(\Sigma)$. This gives a body-level role to the reflection decomposition which is not visible from the scalar identity shown above. 

An immediate check that our proposal is a natural extension is that replacing every edge of an ordinary graph by a positive--negative digon recovers, exactly, the usual bodies $\STAB$, $\THbody$, and $\QSTAB$. The all-negative specialization provides another useful test. In this case, balanced induced subgraphs are precisely induced bipartite subgraphs, and
\[
  C(-G)=K_2\cart G.
\]
Consequently, maximizing over $\BTH(-G)$ gives $\vartheta(K_2\cart G)$, a natural semidefinite bound for the maximum $2$-colourable induced subgraph problem. This places the signed construction next to the generalized theta number $\vartheta_k(G)$ and the recent SDP literature on maximum $k$-colourable induced subgraphs \cite{KuryatnikovaSotirovVera,SinjorgoSotirov,BarkelSotirov}.

Finally, we use the projected bodies to propose a notion of \textit{balanced perfectness}. Perfectness of $C(\Sigma)$ implies balanced perfectness, but the converse fails on a four-vertex signed graph. In that example the signed theta body is already the exact balanced induced subgraph polytope and hence is polyhedral, while $C(\Sigma)$ contains an induced $5$-cycle. This gives a concrete sense in which the body theory developed in this paper is not merely perfect graph theory on $C(\Sigma)$ under a different notation.

The paper is organized as follows. Section~\ref{sec:prelim} fixes our conventions and recalls necessary results. Section~\ref{sec:sphere} introduces the signed sphere and its two-vector form. Section~\ref{sec:sdp} gives the primal and dual semidefinite programs. Sections~\ref{sec:cover} and~\ref{sec:equivalence} study the conflict graph and prove the theta identity. Section~\ref{sec:corners} develops the signed stable-set hierarchy. Section~\ref{sec:perfect} treat the perfectness specializations.


\section{Preliminaries}
\label{sec:prelim}

We refer to Zaslavsky~\cite{ZaslavskySignedGraphs} for the basic theory of signed graphs, including balance and switching. Our conventions concerning homomorphisms follow Naserasr, Rollov\'a, and Sopena~\cite{NaserasrRollovaSopena}; see also~\cite{NaserasrSopenaZaslavsky}.

\subsection{Signed graphs, switching, and balance}

A \emph{signed graph} is a pair $\Sigma=(G,\sigma)$, where $G$ is a finite graph and $\sigma:E(G)\to\{+1,-1\}$. We allow positive loops and exclude negative loops. Parallel edges of opposite signs are allowed and will be called a \emph{digon}. Repeated edges of the same sign are immaterial. The sign of a closed walk is the product of the signs of its edges, counted with multiplicity.

A \emph{switching function} is a map $\tau:V(G)\to\{+1,-1\}$. The switched signature is
\[
  \sigma^\tau(uv)=\tau(u)\sigma(uv)\tau(v).
\]
For a loop at $v$, this formula leaves its sign unchanged. Two signatures are \emph{switching equivalent} if one is obtained from the other in this way.

A signed graph is \emph{balanced} if every cycle has positive sign. A set $B\subseteq V(\Sigma)$ is balanced if $\Sigma[B]$ is balanced. We shall use the following classical characterization.

\begin{theorem}[Harary~\cite{HararyBalance}] \label{thm:harary}
A signed graph $\Sigma=(G,\sigma)$ is balanced if and only if there is a map $h:V(G)\to\{+1,-1\}$ such that
\[
  \sigma(uv)=h(u)h(v)
\]
for every edge $uv$. Equivalently, $\Sigma$ can be switched to the all-positive signature.
\end{theorem}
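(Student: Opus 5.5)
The plan is to prove both directions directly from the definition of balance via cycle signs, and then read off the switching reformulation from the formula $\sigma^\tau(uv)=\tau(u)\sigma(uv)\tau(v)$.

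\emph{Sufficiency.} Suppose $h$ exists with $\sigma(uv)=h(u)h(v)$ for every edge. Take any cycle (or closed walk) $v_0v_1\cdots v_k=v_0$. Its sign is
\[
  \prod_{i}h(v_i)h(v_{i+1}) .
\]
Each vertex occurrence is counted exactly twice, so the product is $+1$. Loops are positive by convention, so they cause no trouble. Digons are consistent with this: a digon $\{u,v\}$ cannot satisfy $\sigma(uv)=h(u)h(v)$ for both signs. This matches the fact that a digon is a negative $2$-cycle, so a graph containing a digon is neither balanced nor admits such an $h$.

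\emph{Necessity.} Suppose every cycle is positive. It suffices to treat each connected component separately. In a component, fix a spanning tree $T$ and a root $r$, and set $h(r)=+1$. For every other vertex $v$, let $h(v)$ be the product of the signs along the unique $r$--$v$ path in $T$. Then $\sigma(uv)=h(u)h(v)$ holds for every tree edge, since the path to the child extends the path to the parent by that edge. Now take a non-tree edge $uv$; this includes the parallel copy of a digon and excludes loops, which are positive. The fundamental cycle formed by $uv$ and the tree path from $u$ to $v$ has sign $\sigma(uv)\,h(u)h(v)$, because the tree path's sign is $h(u)h(v)$ after the common part to the root cancels. By balance this sign is $+1$, so $\sigma(uv)=h(u)h(v)$. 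The only point needing a little care is this: balance is stated for cycles, and the fundamental cycle is a genuine cycle, a $2$-cycle in the case of a parallel edge. Hence the hypothesis applies. This is the main (mild) obstacle.

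\emph{Equivalence with switchability.} Take $\tau=h$. Then
\[
  \sigma^h(uv)=h(u)\sigma(uv)h(v)=h(u)^2h(v)^2=1 ,
\]
so the switched signature is all-positive. Conversely, if $\sigma^\tau\equiv+1$, then $\sigma(uv)=\tau(u)\tau(v)$, so $h=\tau$ works.
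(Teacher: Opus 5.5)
Your proof is correct. The paper gives no proof of its own here: it quotes the result from Harary. Your argument is the standard one, namely the telescoping product for sufficiency, the root-path labelling on a spanning tree checked against fundamental cycles for necessity, and $\tau=h$ for the switching form.

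Your treatment of loops and digons also fits the paper's conventions. Loops are positive and satisfy $h(v)^2=1$ automatically, and a digon counts as a negative $2$-cycle. The paper relies on the latter later, for instance when it uses that the balanced sets of $(G,\pm)$ are exactly the stable sets of $G$.
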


We use a convention tailored to balanced colouring, as in \cite{KuffnerEtAl}. For a signed graph $\Sigma$, let $\Sigma^{+\ell}$ be obtained by adding a positive loop at every vertex. Positive loops do not affect balance, balanced colourings, or switching equivalence. 

\begin{quote}\itshape
Unless stated otherwise, every signed graph is replaced by its positive-loop completion.
\end{quote}
The notation $\Sigma^{+\ell}$ is used when this completion must be displayed explicitly.

\subsection{Signed homomorphisms and balanced colourings}

Let $\Sigma=(G,\sigma)$ and $\Pi=(H,\pi)$ be signed graphs. A \emph{signed homomorphism} $f:\Sigma\to\Pi$ consists of a switching function $\tau:V(G)\to\{\pm1\}$ and a vertex map $f:V(G)\to V(H)$ such that for every edge $e=uv$ of $\Sigma$, the graph $\Pi$ contains an edge between $f(u)$ and $f(v)$ of sign
\[
  \sigma^\tau(e)=\tau(u)\sigma(e)\tau(v).
\]
This is equivalent to preservation of the sign of every closed walk; see \cite{BrewsterFoucaudHellNaserasr}.

A \emph{balanced $p$-colouring} of $\Sigma$ is a partition of $V(\Sigma)$ into at most $p$ balanced sets. The least such $p$ is the \emph{balanced chromatic number} $\chib(\Sigma)$.

For $p\geq1$, define the \emph{balanced complete signed graph} $\cB_p$ as follows: its vertex set is $[p]$, every vertex has a positive loop, and every pair of distinct vertices is joined by one positive and one negative edge. The next proposition is observed in~\cite{JimenezEtAl}.

\begin{proposition}
\label{prop:balanced-hom}
A signed graph $\Sigma$ has a balanced $p$-colouring if and only if there is a signed homomorphism $\Sigma\to\cB_p$. Consequently,
\[
  \chib(\Sigma)=\min\{p:\Sigma\longrightarrow\cB_p\}.
\]
\end{proposition}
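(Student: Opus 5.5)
We prove the two directions of Proposition~\ref{prop:balanced-hom} separately, with Harary's characterization (Theorem~\ref{thm:harary}) as the main tool. The identity for $\chib(\Sigma)$ then follows at once, because the two directions produce colourings and homomorphisms with the same number $p$.

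\emph{From colourings to homomorphisms.} Let $V(\Sigma)=B_1\cup\dots\cup B_p$ be a partition into balanced sets. By Theorem~\ref{thm:harary}, each $\Sigma[B_i]$ has a map $h_i:B_i\to\{\pm1\}$ with $\sigma(uv)=h_i(u)h_i(v)$ for every edge inside $B_i$. Define $\tau(v)=h_i(v)$ and $f(v)=i$ for $v\in B_i$. Now check the edges $e=uv$ by cases.
\begin{enumerate}[(i)]
\item If $u,v\in B_i$ are distinct, then $\sigma^\tau(e)=h_i(u)\sigma(e)h_i(v)=+1$, and $\cB_p$ has a positive loop at $i=f(u)=f(v)$.
\item If $e$ is a positive loop, its sign is unchanged under switching, and it again maps to the positive loop at $f(u)$.
\item If $u\in B_i$ and $v\in B_j$ with $i\neq j$, then $\cB_p$ has edges of both signs between $i$ and $j$, so whatever $\sigma^\tau(e)$ is, the required edge exists.
\end{enumerate}
Digons cause no problem. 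Both edges of a digon inside $B_i$ would need positive sign after switching, which is impossible, but a digon is an unbalanced cycle of length two, so it cannot lie inside a balanced set. We have to read ``cycle'' so that it includes digons. This is consistent with Harary's map: the map $h$ assigns a single value $h(u)h(v)$ to all edges between $u$ and $v$, so parallel edges of opposite signs cannot occur in a balanced graph.

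\emph{From homomorphisms to colourings.} Let $(\tau,f):\Sigma\to\cB_p$ be a signed homomorphism, and set $B_i=f^{-1}(i)$. Any edge $uv$ with $u,v\in B_i$ maps to an edge of $\cB_p$ from $i$ to $i$. The only such edge is the positive loop, so $\sigma^\tau(uv)=+1$. Hence $\tau(u)\tau(v)=\sigma(uv)$ on every edge of $\Sigma[B_i]$, and $\tau|_{B_i}$ is a Harary map for $\Sigma[B_i]$. By Theorem~\ref{thm:harary}, each $B_i$ is balanced. Discarding the empty classes gives a balanced colouring with at most $p$ classes.

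The only delicate step is the treatment of loops and digons in the first direction, that is, confirming that the positive-loop completion and the convention for digons are consistent with Theorem~\ref{thm:harary}. Everything else is a direct unpacking of the definitions.
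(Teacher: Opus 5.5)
Your proof is correct and follows the same route as the paper's. The Harary labelings on the classes give the global switching (the paper phrases this as switching each class to all-positive), edges between classes are absorbed by the digons of $\cB_p$, and conversely the absence of negative loops in $\cB_p$ forces every fibre to be balanced. Your separate discussion of digons is harmless but redundant, since the existence of a Harary map on $\Sigma[B_i]$ already rules out a digon inside a class.
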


\begin{proof}
Suppose $V(\Sigma)=B_1\cup\cdots\cup B_p$, with each $B_i$ balanced. For each nonempty $B_i$, choose a switching that makes every edge of $\Sigma[B_i]$ positive, and combine these switchings into a global function. Map every vertex of $B_i$ to $i$. A monochromatic edge maps to the positive loop at $i$, while an edge between two colour classes maps to an edge of the required sign because $\cB_p$ contains both signs.

Conversely, suppose that, after switching, a map $f:\Sigma\to\cB_p$ preserves signs. Every edge with both ends in one fibre $f^{-1}(i)$ must be positive, because $\cB_p$ has no negative loop. Hence every fibre is balanced, and the fibres form a balanced $p$-colouring.
\end{proof}

A \emph{fractional balanced colouring} is a fractional cover by balanced vertex sets. If $\mathcal B(\Sigma)$ denotes the family of balanced subsets, then $\chifb(\Sigma)$ is the optimum of
\begin{align*}
  \text{minimize}\quad &\sum_{B\in\mathcal B(\Sigma)}y_B\\
  \text{subject to}\quad
  &\sum_{B\ni v}y_B\geq1 \qquad(v\in V(\Sigma)),\\
  &y_B\geq0 \qquad(B\in\mathcal B(\Sigma)).
\end{align*}
See~\cite{HuEtAlFractional,KuffnerEtAlFractionalColoring}.

\subsection{The ordinary Lov\'asz theta function and theta body}
\label{subsec:Lovasz-theta}

For a finite simple graph $G$ containing an edge, the strict vector chromatic number is the least $q\geq2$ for which there are unit vectors $x_v$ satisfying
\[
  \ip{x_u}{x_v}=\frac{1}{1-q}
  \qquad(uv\in E(G)).
\]
It coincides with $\vartheta(\overline G)$; see \cite{KargerMotwaniSudan}. We write
\[
  \tbar(G):=\vartheta(\overline G),
\]
and set $\tbar(G)=1$ when $G$ is edgeless.

Let $\mathsf S_q$ be the infinite graph whose vertices are unit vectors, with adjacency determined by the inner product $1/(1-q)$. The vector definition may be restated as follows; see~\cite{godsil2019graph}.

\begin{proposition}
\label{thm:strict-vector}
For every graph $G$ containing an edge,
\[
  \tbar(G)=\inf\{q\geq2:G\longrightarrow\mathsf S_q\}.
\]
\end{proposition}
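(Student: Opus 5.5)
The statement is a restatement of the strict vector chromatic number as a homomorphism threshold, so the plan is to unfold the definition of a homomorphism into $\mathsf S_q$. A graph homomorphism $f:G\to\mathsf S_q$ is exactly an assignment $v\mapsto x_v:=f(v)$ of unit vectors such that every edge $uv\in E(G)$ is sent to an adjacent pair. That means
\[
  \ip{x_u}{x_v}=\frac{1}{1-q}\qquad(uv\in E(G)).
\]
This is verbatim a strict vector $q$-colouring. The only subtlety is that $\mathsf S_q$ has no loops. An edge $uv$ with $f(u)=f(v)$ would require $\ip{x}{x}=1=1/(1-q)$, which is impossible for $q\ge2$. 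So the map need not be injective, and the definitions still agree. Hence the set $Q(G)$ of $q\ge2$ admitting a homomorphism $G\to\mathsf S_q$ equals the set of $q\ge2$ admitting a strict vector colouring.

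Next I would check that the infimum over $Q(G)$ is the value $\tbar(G)=\vartheta(\overline G)$ quoted from Karger--Motwani--Sudan. The strict vector chromatic number is defined as a least value, so I must confirm two things: that $Q(G)$ is closed, so the infimum is attained, and that the quantity defined this way is the one identified with $\vartheta(\overline G)$. Both follow from the Gram-matrix reformulation. A strict vector $q$-colouring exists if and only if there is a PSD matrix $M$ with $M_{vv}=1$ and $M_{uv}=1/(1-q)$ on edges. Since $G$ is finite, one may take vectors in $\R^{|V|}$.

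For closedness, take a sequence $q_n\downarrow q^*$ in $Q(G)$. The corresponding Gram matrices lie in the compact set of PSD matrices with unit diagonal, so a subsequence converges. The limit is PSD with edge entries $1/(1-q^*)$, because $q\mapsto1/(1-q)$ is continuous on $[2,\infty)$. This gives $q^*\in Q(G)$, so the infimum is a minimum and equals the least $q$ in the definition. By the cited identity, it equals $\tbar(G)$.

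The one step I expect to need care is the edge case in the definition. $Q(G)$ must be nonempty: it contains $q=|V(G)|\ge2$, using the simplex embedding with inner products $-1/(n-1)$ between distinct vertices. The restriction $q\ge2$ must also match the convention, which it does because $G$ has an edge. Beyond that, the proposition is purely a reformulation, so I would keep the write-up to these two observations.
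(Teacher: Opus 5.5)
Your proposal is correct and follows the same route as the paper. The paper gives no argument beyond treating the proposition as a restatement of the strict vector chromatic number, citing Godsil et al.; as you note, a homomorphism into $\mathsf S_q$ is literally a strict vector $q$-colouring. Your extra checks are sound: the loop issue, attainment via compactness of unit-diagonal PSD matrices, and nonemptiness via the simplex. They rely on the reasonable reading that $\mathsf S_q$ consists of unit vectors in a space of dimension at least $|V(G)|$.
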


In particular, $\tbar$ is monotone under graph homomorphisms. We shall also use the sandwich theorem
\[
  \omega(G)\leq\tbar(G)\leq\chi_f(G)\leq\chi(G);
\]
see~\cite{KnuthSandwich}.

The following primal--dual pair fixes our semidefinite programming (SDP) conventions. Here $J$ denotes the all-ones matrix, $\mathbb S^V$ is the space of real symmetric matrices indexed by $V$, and the inner product in $\mathbb S^V$ is given by $\langle A,B\rangle=\Tr(AB)$.
\begin{align}
  \tbar(G)
  = \qquad\qquad \min\quad &\lambda \notag\\
  \text{subject to}\quad
  &\lambda \in \mathbb{R}, M \in \mathbb S^V \notag, \\
  &M\succeq0,\notag\\
  &M_{vv}=\lambda-1 &&(v\in V(G)),\notag\\
  &M_{uv}=-1 &&(uv\in E(G)),
  \label{eq:tbar-primal}
\end{align}
and
\begin{align}
  \tbar(G)
  =\qquad\qquad \max\quad &\langle J,Z\rangle \notag\\
  \text{subject to}\quad
  & Z \in \mathbb S^V, \notag \\
  &Z\succeq0, \notag \\
  &\Tr Z=1,\notag\\
  &Z_{uv}=0 &&(u\neq v,\ uv\notin E(G)).
  \label{eq:tbar-dual}
\end{align}

A \emph{convex corner} in $\R^V$ is a compact convex subset of $\R^V_+$ with nonempty interior which is lower-comprehensive: if $x$ belongs to the corner and $0\leq y\leq x$, then $y$ also belongs to it. We recall the three convex corners associated with an ordinary graph. For a graph $G$, let
\[
  \STAB(G)=\conv\{\one_I:I\subseteq V(G)\text{ is stable}\},
\]
\[
  \QSTAB(G)=\{x\geq0:x(K)\leq1\text{ for every clique }K\text{ of }G\},
\]
and
\begin{equation}
\THbody(G)=\left\{x:\exists X\text{ such that }
\begin{pmatrix}1&x^{\mathsf T}\\x&X\end{pmatrix}\succeq0,
\ \diag X=x,
\ X_{uv}=0\ (uv\in E(G))\right\}.
\label{eq:theta-body}
\end{equation}
Then
\[
  \STAB(G)\subseteq\THbody(G)\subseteq\QSTAB(G).
\]
Moreover, if $C$ is a convex corner of $\R^V$ and $z\in\R^V$, the Minkowski functional (or gauge) of $C$ is $\gamma_C(z):=\inf\{\lambda>0:z\in\lambda C\}$. Then
\begin{equation}
  \tbar(G)=\gamma_{\THbody(G)}(\one).
\label{eq:gauge-theta}
\end{equation}
These facts are standard; see~\cite{GLS,KnuthSandwich}.


\section{Homomorphic definition of the balanced theta parameter}
\label{sec:sphere}

Let $\cH$ be a finite-dimensional real inner-product space and assume
\[
  \cH=\cH_+\oplus\cH_-.
\]
Define $R:\cH\to\cH$ to be the orthogonal involution
\[
  R(a,b)=(a,-b).
\]
For $t\geq1$, set
\[
  \gamma_t=\frac{1}{1-2t}.
\]
We define the signed spherical graph $\cS_t(\cH,R)$ on the vertex set
\begin{equation}
V(\cS_t(\cH,R))=
\{x\in\cH:\norm{x}=1,\ \ip{x}{Rx}=\gamma_t\}
\label{eq:vertex-set}
\end{equation}
by declaring
\begin{align*}
  x\sim_-y
  &\quad\Longleftrightarrow\quad
  \ip{x}{y}=\gamma_t,\\
  x\sim_+y
  &\quad\Longleftrightarrow\quad
  \ip{x}{Ry}=\gamma_t.
\end{align*}
Both relations may hold for a pair of distinct vertices, in which case the
target has a digon.

\begin{lemma}
\label{lem:R-switches}
For adjacent $x,y\in\cS_t(\cH,R)$, applying $R$ to exactly one endpoint interchanges the sign of the edge, while applying $R$ to both endpoints preserves it. Every vertex has a positive loop and no vertex has a negative loop.
\end{lemma}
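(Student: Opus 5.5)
The plan is a direct computation using only that $R$ is an orthogonal involution, hence self-adjoint: $R^{\mathsf T}=R^{-1}=R$. From this we have $\ip{Rx}{y}=\ip{x}{Ry}$, $\ip{Rx}{Ry}=\ip{x}{y}$, and $\ip{x}{RRy}=\ip{x}{y}$. Before discussing edges, I will check that $R$ maps the vertex set \eqref{eq:vertex-set} to itself. Indeed, $\norm{Rx}=\norm{x}=1$, and
\[
\ip{Rx}{R(Rx)}=\ip{Rx}{x}=\ip{x}{Rx}=\gamma_t .
\]
So "applying $R$ to an endpoint" again yields a vertex of $\cS_t(\cH,R)$, and the statement makes sense.

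For the edge claims, take vertices $x,y$ and replace $x$ by $Rx$. The negative relation for the new pair reads $\ip{Rx}{y}=\gamma_t$, which is $\ip{x}{Ry}=\gamma_t$, i.e.\ $x\sim_+y$. The positive relation for the new pair reads $\ip{Rx}{Ry}=\gamma_t$, which is $\ip{x}{y}=\gamma_t$, i.e.\ $x\sim_-y$. Hence $Rx\sim_-y\iff x\sim_+y$ and $Rx\sim_+y\iff x\sim_-y$, so the sign is interchanged; a digon is sent to a digon. The case where $R$ is applied to $y$ alone follows in the same way, or by symmetry of the inner product. Applying $R$ to both endpoints uses $\ip{Rx}{Ry}=\ip{x}{y}$ and $\ip{Rx}{RRy}=\ip{Rx}{y}=\ip{x}{Ry}$, so both relations are preserved. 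Equivalently, this is the one-endpoint case applied twice.

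For loops, the positive loop at a vertex $x$ is the condition $\ip{x}{Rx}=\gamma_t$, which is exactly the defining latitude condition in \eqref{eq:vertex-set}. A negative loop would require $\ip{x}{x}=\gamma_t$, but $\norm{x}^2=1$ while $\gamma_t=1/(1-2t)\le -1<1$ for $t\ge1$, so no negative loop exists.

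No step is a genuine obstacle. The only point needing care is to verify first that $R$ preserves the vertex set, so that the switched endpoint is still a vertex of the target; this uses self-adjointness of $R$ together with $R^2=I$.
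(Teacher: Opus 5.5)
Your proof is correct and is the paper's argument: self-adjointness of $R$ and $R^2=I$ give the interchange (one endpoint) and preservation (both endpoints) of the two relations, the latitude condition is exactly the positive loop, and a negative loop would force $\gamma_t=1$. One small slip: for $t\geq1$ one has $\gamma_t=1/(1-2t)\in[-1,0)$, so $\gamma_t\geq-1$ rather than $\gamma_t\leq-1$, but $\gamma_t<0<1$ still rules out negative loops. Your check that $R$ maps the vertex set to itself is a worthwhile detail the paper leaves implicit.
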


\begin{proof}
Because $R$ is an orthogonal involution, it is self-adjoint. Hence
\[
  \ip{Rx}{y}=\ip{x}{Ry},
  \qquad
  \ip{Rx}{Ry}=\ip{x}{y}.
\]
The two adjacency relations are therefore interchanged by applying $R$ at one endpoint and unchanged by applying it at both. The latitude condition is exactly $x\sim_+x$. A negative loop would require $1=\gamma_t$, which is impossible for $t\geq1$.
\end{proof}

\begin{definition}
\label{def:theta-b}
For a nonempty signed graph $\Sigma$, with a positive loop adjoined at every vertex, define the \emph{balanced Lov\'asz theta parameter} by
\[
  \tb(\Sigma)
  :=\inf\left\{t\geq1:
  \Sigma\longrightarrow\cS_t(\cH,R)
  \text{ for some finite-dimensional }\cH\right\}.
\]
The arrow denotes a signed homomorphism.
\end{definition}

The bar indicates that this is a chromatic-side parameter, analogous to $\tbar(G)$. A switching in the source can be absorbed into the target.

\begin{proposition}
\label{prop:absorb-switching}
A signed homomorphism from $\Sigma=(G,\sigma)$ to $\cS_t(\cH,R)$ exists if and only if there are vectors $x_v\in\cS_t(\cH,R)$ such that, for every signed edge $uv$,
\begin{align*}
  \sigma(uv)=-1&\quad\Longrightarrow\quad
  \ip{x_u}{x_v}=\gamma_t,\\
  \sigma(uv)=+1&\quad\Longrightarrow\quad
  \ip{x_u}{Rx_v}=\gamma_t.
\end{align*}
\end{proposition}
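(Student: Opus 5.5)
The plan is to prove both directions directly from the definition of signed homomorphism, using Lemma~\ref{lem:R-switches} to absorb the switching function into the vectors. The key observation is that $R$ preserves the vertex set of $\cS_t(\cH,R)$. Indeed, $R$ is orthogonal, so $\norm{Rx}=\norm{x}=1$. Moreover, $\ip{Rx}{R(Rx)}=\ip{Rx}{x}=\ip{x}{Rx}=\gamma_t$, using $R^2=I$ and self-adjointness. Hence $x\mapsto Rx$ is a well-defined bijection of $V(\cS_t(\cH,R))$.

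For the ``only if'' direction, suppose the switching function $\tau$ and the vertex map $f$ form a signed homomorphism. I will set $x_v=f(v)$ when $\tau(v)=+1$, and $x_v=Rf(v)$ when $\tau(v)=-1$; more compactly, $x_v=R^{(1-\tau(v))/2}f(v)$. These vectors lie in $\cS_t(\cH,R)$ by the observation above. Fix an edge $uv$. By the homomorphism condition, $f(u)$ and $f(v)$ are joined by an edge of sign $\tau(u)\sigma(uv)\tau(v)$. By Lemma~\ref{lem:R-switches}, applying $R$ to the endpoints with $\tau=-1$ multiplies this sign by $\tau(u)\tau(v)$. So $x_u,x_v$ are joined by an edge of sign $\sigma(uv)$, which is exactly the displayed condition: $\ip{x_u}{x_v}=\gamma_t$ when $\sigma(uv)=-1$, and $\ip{x_u}{Rx_v}=\gamma_t$ when $\sigma(uv)=+1$. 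Loops need separate treatment: a positive loop at $v$ requires $\ip{x_v}{Rx_v}=\gamma_t$, and this is the latitude condition, automatically satisfied. Negative loops are excluded by convention.

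For the converse, I take $\tau\equiv+1$ and $f(v)=x_v$. The displayed conditions then say precisely that each edge $uv$ maps to an edge of sign $\sigma(uv)=\sigma^\tau(uv)$. Loops are again handled by the latitude condition.

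The only delicate point is checking that the switching case analysis is consistent. I would verify it edge by edge in the cases where $\tau(u)\tau(v)=+1$ and where it equals $-1$. When both endpoints are switched, $\ip{Rf(u)}{Rf(v)}=\ip{f(u)}{f(v)}$ and $\ip{Rf(u)}{R\,Rf(v)}=\ip{f(u)}{Rf(v)}$, so the sign is unchanged. When exactly one endpoint is switched, self-adjointness exchanges the two inner products, so the sign flips. There is no real obstacle beyond this bookkeeping.
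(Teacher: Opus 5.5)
Your proof is correct and follows the same argument as the paper. In both, the switching $\tau$ is absorbed by replacing $f(v)$ with $Rf(v)$ wherever $\tau(v)=-1$ and invoking Lemma~\ref{lem:R-switches}, and the other direction takes $\tau\equiv+1$. You also explicitly check that $R$ maps $V(\cS_t(\cH,R))$ to itself, which the paper leaves implicit even though it is needed for $Ry_v$ to be a vertex of the target.
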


\begin{proof}
One direction is immediate. Conversely, suppose that after switching the source by $\tau$, vectors $y_v$ give an edge-sign-preserving map. Set
\[
  x_v=
  \begin{cases}
    y_v,&\tau(v)=+1,\\
    Ry_v,&\tau(v)=-1.
  \end{cases}
\]
By Lemma~\ref{lem:R-switches}, applying $R$ at one endpoint toggles the edge sign and applying it at both preserves it. Thus the resulting vectors realize the original signature.
\end{proof}

Let
\[
  \cS_p=\cS_p(\R^{p-1}\oplus\R^p,R),
\]
where $R$ changes the sign of the second summand.

\begin{proposition}
\label{prop:Bp-embedding}
For every integer $p\geq1$, there is a signed homomorphism
\[
  \cB_p\longrightarrow\cS_p.
\]
Consequently,
\[
  \tb(\Sigma)\leq\chib(\Sigma)
\]
for every signed graph $\Sigma$.
\end{proposition}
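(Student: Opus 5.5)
The plan is to write down an explicit embedding. Taking $x_i=(a_i,b_i)$ with $a_i\in\R^{p-1}$ and $b_i\in\R^p$, we translate every requirement into conditions on the $a_i$ and $b_i$ separately, and then solve those conditions with a regular simplex together with an orthogonal frame.

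\textbf{Reducing to two separate conditions.} For $t=p$ we have $\gamma:=\gamma_p=1/(1-2p)$. The vertex conditions $\norm{x_i}=1$ and $\ip{x_i}{Rx_i}=\gamma$ are equivalent to
\[
\norm{a_i}^2=\tfrac{1+\gamma}{2},\qquad \norm{b_i}^2=\tfrac{1-\gamma}{2}.
\]
The positive loop at each $i\in[p]$ is then automatic by Lemma~\ref{lem:R-switches}. For distinct $i,j$, the digon in $\cB_p$ requires both $\ip{x_i}{x_j}=\gamma$ and $\ip{x_i}{Rx_j}=\gamma$. These read $a_i\cdot a_j\pm b_i\cdot b_j=\gamma$, which is equivalent to
\[
b_i\cdot b_j=0,\qquad a_i\cdot a_j=\gamma.
\]
So the task is to find orthogonal vectors $b_1,\dots,b_p\in\R^p$ of squared norm $(1-\gamma)/2$, and vectors $a_1,\dots,a_p\in\R^{p-1}$ of squared norm $s=(1+\gamma)/2$ with all pairwise inner products equal to $\gamma$.

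\textbf{Solving each condition.} For the $b_i$, take scaled standard basis vectors. For the $a_i$, take the vertices of a regular simplex centred at the origin in $\R^{p-1}$, scaled to squared norm $s$. Their pairwise inner products are $-s/(p-1)$. The one computation to check is that this equals $\gamma$. Since
\[
1+\gamma=\frac{2-2p}{1-2p},
\]
we get
\[
-\frac{s}{p-1}=-\frac{1+\gamma}{2(p-1)}=\frac{1}{1-2p}=\gamma,
\]
as required. In the degenerate case $p=1$ we have $\gamma=-1$ and $s=0$, so $a_1$ is the zero vector of $\R^0$, and there are no pairs to check. Hence $i\mapsto x_i$, with the trivial switching, is a signed homomorphism $\cB_p\to\cS_p$.

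\textbf{The inequality.} Let $p=\chib(\Sigma)$. Proposition~\ref{prop:balanced-hom} gives a signed homomorphism $\Sigma\to\cB_p$, and we compose it with the embedding above. Composition is legitimate: a switching $\tau$ of $\Sigma$ followed by a sign-preserving map into $\cB_p$, then a sign-preserving map into $\cS_p$, is again a signed homomorphism. Alternatively, one can apply Proposition~\ref{prop:absorb-switching} directly to the vectors $x_{f(v)}$. Either way $\Sigma\to\cS_p$, so $\tb(\Sigma)\le p$.

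The only real obstacle is the arithmetic identity $-s/(p-1)=\gamma_p$. It is exactly what fixes the normalization $\gamma_t=1/(1-2t)$ together with the dimension split $(p-1,p)$, and everything else is bookkeeping.
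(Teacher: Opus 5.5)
Your proposal is correct and is the paper's proof: the same explicit points $x_i$ built from a regular simplex in $\R^{p-1}$ (your squared norm $(1+\gamma_p)/2$ is the paper's $(p-1)/(2p-1)$) and an orthogonal frame in $\R^p$ (squared norm $(1-\gamma_p)/2=p/(2p-1)$), followed by composition with Proposition~\ref{prop:balanced-hom} to get $\tb(\Sigma)\le\chib(\Sigma)$. Your explicit treatment of the $p=1$ case and of the composition of signed homomorphisms just fills in steps the paper calls immediate.
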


\begin{proof}
The case $p=1$ is immediate. Assume $p\geq2$. Let $a_1,\ldots,a_p$ be the vertices of a regular simplex in $\R^{p-1}$, so that
\[
  \ip{a_i}{a_j}=\frac{1}{1-p}
  \quad(i\neq j),
\]
and let $b_1,\ldots,b_p$ be an orthonormal basis of $\R^p$. Define
\[
  x_i=
  \sqrt{\frac{p-1}{2p-1}}\,a_i
  \oplus
  \sqrt{\frac{p}{2p-1}}\,b_i.
\]
Then $\norm{x_i}=1$, $\ip{x_i}{Rx_i}=\gamma_p$, and, for $i\neq j$,
\[
  \ip{x_i}{x_j}=\ip{x_i}{Rx_j}=\gamma_p.
\]
Thus these points induce a copy of $\cB_p$ in $\cS_p$. The final assertion follows from Proposition~\ref{prop:balanced-hom}.
\end{proof}

The two coordinates of the target may always be conveniently separated. Put
\[
  \alpha_t=\sqrt{\frac{t-1}{2t-1}},
  \qquad
  \beta_t=\sqrt{\frac{t}{2t-1}}.
\]

\begin{lemma}
\label{lem:fixed-norms}
Write $x=x_+\oplus x_-\in\cH_+\oplus\cH_-$. Then
$x\in\cS_t(\cH,R)$ if and only if
\[
  \norm{x_+}^2=\frac{t-1}{2t-1} = \alpha_t^2,
  \qquad
  \norm{x_-}^2=\frac{t}{2t-1} = \beta_t^2.
\]
For $t>1$, every such point can be written $x=\alpha_t a\oplus\beta_t b$ with $a,b$ unit vectors. At $t=1$, the positive component vanishes.
\end{lemma}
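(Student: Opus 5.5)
The plan is to reduce the statement to a two-by-two linear system in the unknowns $\norm{x_+}^2$ and $\norm{x_-}^2$. Since $x_+\in\cH_+$ and $x_-\in\cH_-$ lie in orthogonal summands and $R$ acts as the identity on $\cH_+$ and as $-I$ on $\cH_-$, we have $Rx=x_+\oplus(-x_-)$. This gives
\[
  \norm{x}^2=\norm{x_+}^2+\norm{x_-}^2,
  \qquad
  \ip{x}{Rx}=\norm{x_+}^2-\norm{x_-}^2 .
\]
By \eqref{eq:vertex-set}, membership $x\in\cS_t(\cH,R)$ is exactly the pair of conditions $\norm{x}^2=1$ and $\ip{x}{Rx}=\gamma_t$. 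By the identities above, these are the same as
\[
  \norm{x_+}^2+\norm{x_-}^2=1,
  \qquad
  \norm{x_+}^2-\norm{x_-}^2=\gamma_t .
\]
This system has the unique solution $\norm{x_\pm}^2=\tfrac12(1\pm\gamma_t)$. Both directions of the equivalence follow, because the two pairs of conditions are equivalent as linear systems.

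The only computation is substituting $\gamma_t=1/(1-2t)$. For the fixed part,
\[
  1+\gamma_t=\frac{2-2t}{1-2t}=\frac{2(t-1)}{2t-1},
\]
so $\norm{x_+}^2=\frac{t-1}{2t-1}$. For the anti-fixed part,
\[
  1-\gamma_t=\frac{-2t}{1-2t}=\frac{2t}{2t-1},
\]
so $\norm{x_-}^2=\frac{t}{2t-1}$. These are $\alpha_t^2$ and $\beta_t^2$, and both are nonnegative for $t\geq1$, so the square roots $\alpha_t,\beta_t$ are well defined.

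For the parametrization, take $t>1$. Then $\alpha_t>0$ and $\beta_t>0$, so we may set $a=x_+/\alpha_t$ and $b=x_-/\beta_t$. Both are unit vectors, and $x=\alpha_t a\oplus\beta_t b$.

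At $t=1$ we get $\alpha_1=0$, so $\norm{x_+}=0$ and hence $x_+=0$; this is the claim that the positive component vanishes. In that case $\beta_1=1$, so $x=x_-$ is a unit vector in $\cH_-$.

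No step is a real obstacle. The only point needing care is to use the orthogonality of $\cH_+$ and $\cH_-$ when expanding $\ip{x}{Rx}$, since this is what kills the cross terms.
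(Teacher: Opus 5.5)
Your proof is correct and follows the paper's argument exactly: the paper also writes the two defining conditions as $\norm{x_+}^2+\norm{x_-}^2=1$ and $\norm{x_+}^2-\norm{x_-}^2=-\frac{1}{2t-1}$, then adds and subtracts them. Your remarks on the orthogonality of $\cH_+$ and $\cH_-$ and on the normalizations $a=x_+/\alpha_t$, $b=x_-/\beta_t$ just spell out details the paper leaves implicit.
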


\begin{proof}
The two defining equations are
\[
  \norm{x_+}^2+\norm{x_-}^2=1,
  \qquad
  \norm{x_+}^2-\norm{x_-}^2=-\frac{1}{2t-1}.
\]
Adding and subtracting gives the result.
\end{proof}

\begin{theorem}
\label{thm:two-vector}
For $t\geq1$, a signed graph $\Sigma=(G,\sigma)$ maps to $\cS_t(\cH,R)$ if and only if there are two families of unit vectors $(a_v)_{v\in V(G)}$ and $(b_v)_{v\in V(G)}$, in possibly different real inner-product spaces, such that
\begin{equation}
  (t-1)\ip{a_u}{a_v}
  -t\,\sigma(uv)\ip{b_u}{b_v}
  =-1
  \qquad(uv\in E(G)).
\label{eq:SV}
\end{equation}
\end{theorem}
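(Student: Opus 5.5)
The plan is to reduce everything to Proposition~\ref{prop:absorb-switching} and Lemma~\ref{lem:fixed-norms}, and then compute the two inner products componentwise. By Proposition~\ref{prop:absorb-switching}, a signed homomorphism $\Sigma\to\cS_t(\cH,R)$ exists if and only if there are points $x_v\in\cS_t(\cH,R)$ satisfying the two conditions stated there, so no switching needs to be carried along.

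For the forward direction, take such points $x_v$ and, for $t>1$, write $x_v=\alpha_t a_v\oplus\beta_t b_v$ with unit vectors $a_v\in\cH_+$ and $b_v\in\cH_-$, as provided by Lemma~\ref{lem:fixed-norms}. Then
\[
  \ip{x_u}{x_v}=\alpha_t^2\ip{a_u}{a_v}+\beta_t^2\ip{b_u}{b_v},
  \qquad
  \ip{x_u}{Rx_v}=\alpha_t^2\ip{a_u}{a_v}-\beta_t^2\ip{b_u}{b_v}.
\]
Hence the edge condition for an edge of sign $\sigma(uv)$ can be written uniformly as
\[
  \alpha_t^2\ip{a_u}{a_v}-\sigma(uv)\beta_t^2\ip{b_u}{b_v}=\gamma_t .
\]
Multiplying by $2t-1$ and using $\alpha_t^2=(t-1)/(2t-1)$, $\beta_t^2=t/(2t-1)$ and $(2t-1)\gamma_t=-1$ gives exactly \eqref{eq:SV}. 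Loops require no separate treatment. For a positive loop at $v$, \eqref{eq:SV} reads $(t-1)-t=-1$, which holds automatically, matching the fact that the latitude condition is the positive loop, as in Lemma~\ref{lem:R-switches}.

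For the converse, given unit vectors $a_v$ in a space $\cH_+$ and $b_v$ in a space $\cH_-$, possibly different spaces, set $\cH=\cH_+\oplus\cH_-$ with $R(a,b)=(a,-b)$ and define $x_v=\alpha_t a_v\oplus\beta_t b_v$. By Lemma~\ref{lem:fixed-norms} each $x_v$ lies in $\cS_t(\cH,R)$, and the same computation run backwards shows that the conditions of Proposition~\ref{prop:absorb-switching} hold. Since only finitely many vectors are involved, we may restrict $\cH_\pm$ to the spans of the $a_v$ and of the $b_v$, which keeps $\cH$ finite-dimensional as Definition~\ref{def:theta-b} requires.

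The only delicate point is the boundary case $t=1$. There $\alpha_1=0$, so the $a$-component disappears and the coefficient $t-1$ in \eqref{eq:SV} also vanishes. In the forward direction we therefore choose $a_v$ to be one arbitrary fixed unit vector, for example in $\R$, which makes the $a$-term vanish identically. In the converse direction the $a_v$ are simply ignored and we take $x_v=0\oplus b_v$. The computation above remains valid with $\alpha_1=0$, so the case $t=1$ needs only this separate remark.
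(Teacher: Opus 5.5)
Your proof is correct and follows the same route as the paper: you absorb the switching via Proposition~\ref{prop:absorb-switching}, split $x_v=\alpha_t a_v\oplus\beta_t b_v$ by Lemma~\ref{lem:fixed-norms}, multiply the resulting edge condition by $2t-1$, and obtain the converse by reversing the computation. Your explicit treatment of loops, of finite-dimensionality, and of the degenerate case $t=1$ fills in details that the paper's terse proof leaves implicit.
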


\begin{proof}
By Proposition~\ref{prop:absorb-switching} and Lemma~\ref{lem:fixed-norms}, write $x_v=\alpha_t a_v\oplus\beta_t b_v$. On a negative edge the defining inner product is $\ip{x_u}{x_v}$, while on a positive edge it is $\ip{x_u}{Rx_v}$. Multiplication by $2t-1$ gives \eqref{eq:SV} in both cases. The converse follows by reversing the calculation. At $t=1$, the $a$-term vanishes.
\end{proof}


\section{Semidefinite programming formulation}
\label{sec:sdp}

The coefficients in Theorem~\ref{thm:two-vector} can be absorbed into the two Gram matrices. This gives the formulation we shall use throughout the paper.

\begin{theorem}
\label{thm:signed-theta-sdp}
For every nonempty signed graph $\Sigma$, the value $\tb(\Sigma)$ is the optimum of
\begin{equation*}
\begin{aligned}
  \text{minimize}\quad &t\\
  \text{subject to}\quad
  &X,Y \in \mathbb S^V, \qquad t \in \mathbb{R},\\ 
  &X\succeq0,\qquad Y\succeq0,\\
  &X_{vv}=t-1
    &&(v\in V(\Sigma)),\\
  &Y_{vv}=t
    &&(v\in V(\Sigma)),\\
  &X_{uv}-\sigma(uv)Y_{uv}=-1
    &&(uv\in E(\Sigma)).
\end{aligned}
\tag{$\mathrm P_\Sigma$}
\end{equation*}
\end{theorem}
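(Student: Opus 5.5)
We will pass between the vector formulation of Theorem~\ref{thm:two-vector} and the SDP $(\mathrm P_\Sigma)$ by taking Gram matrices, $X=(t-1)\,\mathrm{Gram}(a)$ and $Y=t\,\mathrm{Gram}(b)$. This is essentially a rescaling. The only care needed concerns the infimum versus minimum, the boundary value $t=1$, and feasibility.

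\emph{From a homomorphism to a feasible point.} Suppose $\Sigma\to\cS_t(\cH,R)$ for some $t\ge1$. By Theorem~\ref{thm:two-vector} there are unit vectors $a_v,b_v$ satisfying \eqref{eq:SV}. Set $X_{uv}=(t-1)\ip{a_u}{a_v}$ and $Y_{uv}=t\ip{b_u}{b_v}$. Both matrices are positive semidefinite because $t-1\ge0$ and $t>0$. Their diagonals are $t-1$ and $t$, and \eqref{eq:SV} becomes exactly $X_{uv}-\sigma(uv)Y_{uv}=-1$. For a positive loop at $v$ this reads $(t-1)-t=-1$, which holds automatically. Hence every such $t$ is feasible for $(\mathrm P_\Sigma)$.

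\emph{From a feasible point to a homomorphism.} Let $(t,X,Y)$ be feasible. Since $Y\succeq0$ has diagonal $t$, we get $t\ge0$. Since $X\succeq0$ has diagonal $t-1$, we get $t\ge1$. If $t=1$ then $X=0$, because a PSD matrix with zero diagonal vanishes. In that case the edge constraints give $\sigma(uv)Y_{uv}=1$. Take $b_v$ to be the Gram vectors of $Y$, which are unit vectors, and take $a_v$ arbitrary; the $a$-term in \eqref{eq:SV} has coefficient $0$. If $t>1$, factor $X/(t-1)$ and $Y/t$ as Gram matrices of unit vectors $a_v,b_v$ in $\R^{|V|}$. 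Reversing the computation above gives \eqref{eq:SV}, and Theorem~\ref{thm:two-vector} then gives $\Sigma\to\cS_t(\cH,R)$ with $\cH=\R^{|V|}\oplus\R^{|V|}$. Thus the set of $t$ in Definition~\ref{def:theta-b} coincides with the set of $t$-values of feasible points of $(\mathrm P_\Sigma)$.

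\emph{Infimum attained.} Feasibility is nonempty by Proposition~\ref{prop:Bp-embedding}: take $t=\chib(\Sigma)$, or simply map $\Sigma\to\cB_{|V|}\to\cS_{|V|}$. Attainment of the minimum is the step requiring some argument, and I would handle it by compactness. Take a minimizing sequence $t_k\downarrow t^*$ and restrict to $t_k\le t_0$. The matrices $X_k,Y_k$ are PSD with bounded diagonals, so their entries are bounded by $t_0$. A subsequence therefore converges. The limit remains PSD, and the affine diagonal and edge constraints persist in the limit. So $t^*$ is feasible, and by the previous step it is realized by a homomorphism. Hence $\tb(\Sigma)$ is the minimum of $(\mathrm P_\Sigma)$, and the infimum in Definition~\ref{def:theta-b} is attained. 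The main subtlety is therefore the degenerate case $t=1$ together with this closedness argument. Both are handled by noting that the feasible set restricted to $t\le t_0$ is compact.
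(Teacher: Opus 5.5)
Your proposal is correct and follows the paper's proof. You rescale the Gram matrices of the two unit-vector families from Theorem~\ref{thm:two-vector} by $t-1$ and $t$, and conversely normalize a feasible $(t,X,Y)$, treating $t=1$ separately via $X=0$; your added compactness argument for attainment of the minimum is absent from the paper's proof of this theorem, but it is the same bounded-level-set reasoning the paper uses later in the proof of Theorem~\ref{thm:signed-theta-dual}.
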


\begin{proof}
Let $P,Q$ be the Gram matrices of the two unit-vector families in Theorem~\ref{thm:two-vector}. Setting
\[
  X=(t-1)P,
  \qquad
  Y=tQ
\]
gives a feasible solution of~$\mathrm P_\Sigma$. Conversely, a feasible solution has $t\geq1$ because $X\succeq0$ and $X_{vv}=t-1$. If $t>1$, the matrices $X/(t-1)$ and $Y/t$ are Gram matrices of unit vectors satisfying~\eqref{eq:SV}. If $t=1$, then $X=0$, while $Y$ itself is a Gram matrix.
\end{proof}

The standard dual keeps the same two-block symmetry. If both signs occur between $u$ and $v$, no off-diagonal relation is imposed in the dual.

\begin{theorem}
\label{thm:signed-theta-dual}
The dual of $\mathrm P_\Sigma$ is
\begin{equation*}
\begin{aligned}
  \text{maximize}\quad &\langle J,S\rangle\\
  \text{subject to}\quad
  &S,T \in \mathbb S^V,\\ 
  &S\succeq0,\qquad T\succeq0,\\
  &\Tr(S+T)=1,\\
  &S_{uv}=T_{uv}=0
    &&\bigl(u\neq v,\ uv\notin E_+(\Sigma)\cup E_-(\Sigma)\bigr),\\
  &S_{uv}+T_{uv}=0
    &&\bigl(uv\in E_+(\Sigma)\setminus E_-(\Sigma)\bigr),\\
  &S_{uv}-T_{uv}=0
    &&\bigl(uv\in E_-(\Sigma)\setminus E_+(\Sigma)\bigr).
\end{aligned}
\tag{$\mathrm D_\Sigma$}
\end{equation*}
The primal and dual optima are attained and equal.
\end{theorem}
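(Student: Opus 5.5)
The plan is to derive $\mathrm D_\Sigma$ by standard Lagrangian duality and then prove attainment and equality through Slater-type strict feasibility on both sides.

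\textbf{Step 1: derive the dual.} Rewrite $\mathrm P_\Sigma$ in conic form with variables $(t,X,Y)$. Attach multipliers to each family of constraints:
\begin{itemize}
\item $\mu_v$ to $X_{vv}=t-1$;
\item $\nu_v$ to $Y_{vv}=t$;
\item $w_{uv}^{\pm}$ to the edge constraints $X_{uv}-\sigma Y_{uv}=-1$, separately for positive and negative edges.
\end{itemize}
Collecting the coefficients of $X$ and $Y$ gives two symmetric matrices $S$ and $T$. Their diagonals come from $\mu$ and $\nu$. Their off-diagonal entries come from the edge multipliers: a negative edge contributes $w^-$ to both $S_{uv}$ and $T_{uv}$, and a positive edge contributes $w^+$ to $S_{uv}$ and $-w^+$ to $T_{uv}$.

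Stationarity in $t$ gives $\Tr S+\Tr T=1$ (after a sign normalisation). The objective collects the constants, namely $\sum_v\mu_v$ plus the edge terms, and should recombine into $\langle J,S\rangle$.

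The support pattern then follows by eliminating the free edge multipliers:
\begin{itemize}
\item a non-edge forces $S_{uv}=T_{uv}=0$;
\item a positive-only edge forces $S_{uv}=-T_{uv}$;
\item a negative-only edge forces $S_{uv}=T_{uv}$;
\item a digon, having two free multipliers, imposes no relation.
\end{itemize}
I will do this bookkeeping carefully, with the signs chosen so that $S\succeq0$ and $T\succeq0$ appear as the dual cone constraints. The main check is that the objective really becomes $\langle J,S\rangle$. The diagonal constant $-1\cdot(\text{from }t-1)$ contributes $\Tr S$, and each edge contributes $2S_{uv}$ (twice for symmetry), so the total is $\sum_{u,v}S_{uv}$.

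\textbf{Step 2: weak duality.} As a sanity check I will verify directly that $t\ge\langle J,S\rangle$. For feasible $(t,X,Y)$ and $(S,T)$, compute $0\le\langle X,S\rangle+\langle Y,T\rangle$ using the constraints. This equals $(t-1)\Tr S+t\Tr T+\sum_{u\ne v}(X_{uv}S_{uv}+Y_{uv}T_{uv})$.

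On each edge, the support conditions let me rewrite $X_{uv}S_{uv}+Y_{uv}T_{uv}$ as $(X_{uv}-\sigma Y_{uv})S_{uv}=-S_{uv}$. For digons this uses both primal equations, which give $X_{uv}=-1$ and $Y_{uv}=0$. Summing over all pairs, the inequality becomes $0\le t-\langle J,S\rangle$.

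\textbf{Step 3: strong duality and attainment.} I will use Slater's condition on both sides.
\begin{itemize}
\item \emph{Primal.} Proposition~\ref{prop:Bp-embedding} gives feasibility. To get a strictly feasible point, take $X=-J+sI$ restricted appropriately and $Y=(t)I$. More simply, take $X=(t-1)I+(-J+I)$ with $Y=tI$ on edges. Then $X=tI-J$ is positive definite for $t>|V|$, and $Y=tI\succ0$ satisfies all edge constraints with $Y_{uv}=0$. Hence the primal is strictly feasible.
\item \emph{Dual.} Take $S=T=\frac{1}{2|V|}I$, which is strictly feasible (positive definite and satisfying all linear constraints).
\end{itemize}
With both sides strictly feasible, standard conic duality gives equal optimal values, both attained. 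Primal boundedness below by $t\ge1$ ensures the value is finite.

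The main obstacle is the sign bookkeeping in Step 1, so that the constraint patterns, especially the digon case, come out exactly as stated. Step 2 serves as the independent check of that bookkeeping.
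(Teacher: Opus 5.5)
Your proposal is correct and follows essentially the same route as the paper: Lagrange multipliers on the diagonal and edge constraints, elimination of the edge multipliers to get the support pattern (digons unconstrained, objective recombining to $\langle J,S\rangle$), and conic duality for equality and attainment. The only difference is that you certify strong duality with an explicit primal interior point $X=tI-J$, $Y=tI$ ($t>|V(\Sigma)|$) together with the paper's dual point $S=T=I/(2|V(\Sigma)|)$, whereas the paper combines dual strict feasibility with primal feasibility and compactness; your version is valid and gives both attainments at once, though you should delete the abandoned first attempt at the primal interior point in the write-up.
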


\begin{proof}
For $u\neq v$, put
\[
  F^{uv}=\frac12(e_u e_v^{\mathsf T}+e_v e_u^{\mathsf T}).
\]
Assign unrestricted multipliers $\alpha_v,\beta_v$ to the two diagonal constraints and $z_e$ to the constraint indexed by an edge $e=uv$. The Lagrange dual requires
\[
  S=\Diag(\alpha)+\sum_{e=uv}z_e F^{uv}\succeq0,
  \qquad
  T=\Diag(\beta)-\sum_{e=uv}\sigma(e)z_e F^{uv}\succeq0,
\]
\[
  \sum_v(\alpha_v+\beta_v)=1,
\]
and has objective $\sum_v\alpha_v+\sum_e z_e$. Eliminating the multipliers gives exactly the support conditions in $\mathrm D_\Sigma$; moreover, $\sum_v\alpha_v+\sum_e z_e=\langle J,S\rangle$.

The dual is strictly feasible with $S=T=I/(2|V(\Sigma)|)$. The primal is feasible by Proposition~\ref{prop:Bp-embedding}, and a bounded level set is compact because all entries of positive semidefinite matrices are controlled by their diagonals. Standard SDP duality therefore gives equality and attainment.
\end{proof}


\section{The conflict graph}
\label{sec:cover}

The double switching graph $\operatorname{DSG}(\Sigma)$ is a signed graph on $V(\Sigma)\times\{\pm1\}$; see \cite{BrewsterGraves,NaserasrSopenaZaslavsky}. We use the ordinary spanning graph formed by its negative edges:
\[
  C(\Sigma):=\operatorname{DSG}(\Sigma)^{-}.
\]
This graph appears explicitly in the recent fractional balanced colouring literature~\cite{KuffnerEtAlFractionalColoring}. We call it the \emph{conflict graph} of $\Sigma$, emphasizing its optimization interpretation.

Its vertex set is $V(\Sigma)\times\{\pm1\}$. We write $\pi(v,s)=v$ for the natural projection onto $V(\Sigma)$. The adjacency rule is \begin{equation}
  (u,s)\sim(v,r)
  \quad\Longleftrightarrow\quad
  \text{there is an edge }e=uv \text{ in } \Sigma\text{ with }r=-\sigma(e)s.
\label{eq:conflict-adjacency}
\end{equation}
A positive edge $uv$ produces edges $(u,\pm) \sim (v,\mp)$, a negative edge $uv$ gives $(u,\pm) \sim (v,\pm)$, and the positive loop at $v$ produces the vertical edge $(v,+)(v,-)$. Thus two states are adjacent precisely when they cannot occur together in one balanced set with the displayed switching labels.

The fibre involution
\begin{equation}
  \iota(v,s)=(v,-s) \label{involution}
\end{equation}
is a fixed-point-free automorphism. Switching $\Sigma$ at a function $\tau$ induces the isomorphism $(v,s)\mapsto(v,\tau(v)s)$. A signed homomorphism $\Sigma\to\Pi$ also induces an ordinary homomorphism $C(\Sigma)\to C(\Pi)$ commuting with the fibre involutions.

\begin{proposition}
\label{prop:balanced-independent}
Independent sets of $C(\Sigma)$ are in correspondence with pairs $(B,h)$, where $B$ is a balanced set of $\Sigma$ and $h:B\to\{\pm1\}$ is a Harary labeling. In particular,
\[
  \alpha(C(\Sigma))=\betab(\Sigma),
\]
where $\betab(\Sigma)$ is the maximum cardinality of a balanced vertex set.
\end{proposition}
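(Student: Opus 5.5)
The plan is to write down both maps of the correspondence explicitly and check that each respects the constraints. Given an independent set $I$ of $C(\Sigma)$, the vertical edges $(v,+)(v,-)$ come from the positive loops. So $I$ meets each fibre $\pi^{-1}(v)$ in at most one state. We therefore set $B=\pi(I)$ and define $h:B\to\{\pm1\}$ by $h(v)=s$, where $(v,s)$ is the unique element of $I$ over $v$. Conversely, given a pair $(B,h)$, we set $I=\{(v,h(v)):v\in B\}$. These two assignments are clearly mutually inverse, so the whole content of the statement is that independence of $I$ corresponds exactly to $h$ being a Harary labeling of $\Sigma[B]$.

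For that equivalence we use the adjacency rule \eqref{eq:conflict-adjacency}. Take $u,v\in B$ (possibly $u=v$, in which case only loops are relevant). The states $(u,h(u))$ and $(v,h(v))$ are adjacent if and only if some edge $e=uv$ satisfies $h(v)=-\sigma(e)h(u)$, that is, $\sigma(e)\neq h(u)h(v)$. Hence $I$ is independent exactly when every edge $e=uv$ of $\Sigma[B]$ satisfies $\sigma(e)=h(u)h(v)$. This is precisely the Harary condition of Theorem~\ref{thm:harary} for $\Sigma[B]$.

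Two degenerate cases need a check. Positive loops satisfy the condition automatically, since $h(v)^2=1$, and this matches the fact that a single state $(v,s)$ has no loop in $C(\Sigma)$. A digon between $u$ and $v$ forces both sign conditions at once, which is impossible, and this matches the fact that such $u,v$ can never both lie in a balanced set. In particular, $B$ is balanced by Theorem~\ref{thm:harary}, and conversely every balanced $B$ admits some Harary labeling $h$.

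Finally, the correspondence gives $|I|=|B|$. Since every balanced set admits at least one Harary labeling, maximizing over both sides yields $\alpha(C(\Sigma))=\betab(\Sigma)$. The only delicate point in the whole argument is the bookkeeping of loops and digons under the positive-loop convention; the rest is a direct unwinding of the definitions.
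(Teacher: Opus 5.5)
Your proof is correct and follows the paper's argument: the vertical edges leave at most one state per fibre, and the adjacency rule \eqref{eq:conflict-adjacency} turns independence of $\{(v,h(v)):v\in B\}$ into $\sigma(uv)=h(u)h(v)$ on $\Sigma[B]$, which is exactly a Harary labeling. Your explicit treatment of loops, digons, and the mutually inverse maps spells out what the paper compresses into ``the converse follows by reversing the same argument.''
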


\begin{proof}
An independent set contains at most one of $(v,+)$ and $(v,-)$. Write it as $I=\{(v,h(v)):v\in B\}$. If $uv$ is an edge of $\Sigma[B]$, independence and~\eqref{eq:conflict-adjacency} give $h(v)\neq-\sigma(uv)h(u)$, hence
\[
  \sigma(uv)=h(u)h(v).
\]
Thus $h$ is a Harary labeling and $B$ is balanced. The converse follows by reversing the same argument.
\end{proof}

By a simple signed graph we mean one in which, between distinct vertices, there is at most one edge. The prescribed positive loops are disregarded when using the word simple.

The clique structure has an equally natural signed interpretation. A simple signed complete graph is \emph{antibalanced} if it can be switched to the all-negative edge signature. Let $\omega_{\mathrm{ab}}(\Sigma)$ denote the largest order of an antibalanced clique in a simple signed graph $\Sigma$.

\begin{proposition}
\label{prop:conflict-cliques}
If $\Sigma$ is a simple signed graph, then
\[
  \omega(C(\Sigma))=\max\{2,\omega_{\mathrm{ab}}(\Sigma)\}.
\]
More precisely, every clique of $C(\Sigma)$ with at least three vertices projects injectively onto an antibalanced clique of $\Sigma$, and every antibalanced clique has a lift which is a clique of $C(\Sigma)$.
\end{proposition}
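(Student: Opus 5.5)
The plan is to work directly with the adjacency rule \eqref{eq:conflict-adjacency}. Because $\Sigma$ is simple, each pair of distinct vertices $u,v$ carries at most one edge, with sign $\sigma(uv)$, besides the positive loops. The lower bound $\omega(C(\Sigma))\geq 2$ comes from the vertical edge $(v,+)(v,-)$ produced by the positive loop, provided $V(\Sigma)\neq\emptyset$. The rest of the argument splits into a projection step (cliques of $C(\Sigma)$ give antibalanced cliques of $\Sigma$) and a lifting step (antibalanced cliques of $\Sigma$ give cliques of $C(\Sigma)$). Together with the bound $2$, these yield the equality.

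\emph{Projection.} Let $K$ be a clique of $C(\Sigma)$ with $|K|\geq 3$.
\begin{enumerate}
\item First show that $K$ contains at most one state per fibre. Suppose $(v,+),(v,-)\in K$ and take a third element $(u,r)\in K$ with $u\neq v$. Adjacency of $(u,r)$ to both states of $v$ requires edges $uv$ with $-\sigma(e)r=+$ and with $-\sigma(e')r=-$. Hence $uv$ has edges of both signs, a digon, contradicting simplicity. So $\pi$ is injective on $K$.
\item Write $K=\{(v,h(v)):v\in B\}$. Every pair of distinct $u,v\in B$ is adjacent in $C(\Sigma)$, so some edge $uv$ exists with $h(v)=-\sigma(uv)h(u)$.
\item It follows that $B$ is a clique of $\Sigma$ and $\sigma(uv)=-h(u)h(v)$.
\item Switching by $\tau=h$ gives $\sigma^h(uv)=h(u)\sigma(uv)h(v)=-1$ on every edge, so $\Sigma[B]$ is antibalanced.
\end{enumerate}
This gives $\omega(C(\Sigma))\le\max\{2,\omega_{\mathrm{ab}}(\Sigma)\}$.

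\emph{Lifting.} Let $B$ be an antibalanced clique, and choose a switching $h$ on $B$ making all its edges negative, so that $\sigma(uv)=-h(u)h(v)$. Then $\{(v,h(v)):v\in B\}$ is pairwise adjacent by \eqref{eq:conflict-adjacency}, and the projection is a bijection, so this is a clique of size $|B|$. This gives the reverse inequality. Cliques of size at most $1$ are trivially antibalanced, so degenerate sizes cause no trouble.

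The only real subtlety is the first projection step: ruling out two states of the same fibre once the clique has three vertices. This is exactly where simplicity is used, since a digon $uv$ would give the $3$-clique $\{(v,+),(v,-),(u,+)\}$ lying over a non-antibalanced pair. One should also note that for $|K|=2$ a vertical edge is allowed, which is why the formula takes $\max$ with $2$.
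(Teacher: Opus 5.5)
Your proof is correct and follows essentially the same route as the paper. Simplicity prevents a clique of order at least three from containing both states of a fibre, and the adjacency rule then gives $\sigma(uv)=-h(u)h(v)$, i.e.\ antibalance. The same labeling lifts antibalanced cliques back to cliques of $C(\Sigma)$, and the vertical edges supply the term $2$.
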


\begin{proof}
A clique of $C(\Sigma)$ containing both states above one vertex cannot contain a third vertex: for a fixed state above $v$, simplicity of $\Sigma$ makes it adjacent to exactly one of the two states above any other $u$. Hence every clique of $C(\Sigma)$ of order at least three has the form $\{(v,s_v):v\in K\}$, where $K$ is a clique of $\Sigma$. Its adjacency relations give
\[
  \sigma(uv)=-s_u s_v
  \qquad(u,v\in K),
\]
which says precisely that $\Sigma[K]$ is antibalanced. Conversely, such a labeling lifts $K$ to a clique. The vertical edges account for the term $2$.
\end{proof}

The fractional colouring correspondence is exact.

\begin{proposition}[Kuffner et al.~\cite{KuffnerEtAlFractionalColoring}]
\label{prop:fractional-identity}
For every signed graph $\Sigma$,
\[
  \chi_f(C(\Sigma))=2\chifb(\Sigma).
\]
\end{proposition}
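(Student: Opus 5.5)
We prove $\chi_f(C(\Sigma))=2\chifb(\Sigma)$ by comparing the two fractional covering programs, one inequality in each direction, using the correspondence of Proposition~\ref{prop:balanced-independent} between independent sets of $C(\Sigma)$ and balanced sets with Harary labelings. For an independent set $I=\{(v,h(v)):v\in B\}$ we write $\iota(I)$ for its image under the fibre involution~\eqref{involution}. This is the independent set attached to the pair $(B,-h)$, and $-h$ is again a Harary labeling, since $\sigma(uv)=h(u)h(v)=(-h(u))(-h(v))$. Moreover $I\cup\iota(I)=\pi^{-1}(B)$.

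\emph{Upper bound.} Start from an optimal fractional balanced colouring $(y_B)$. For each balanced $B$ fix one Harary labeling $h_B$, which exists by Theorem~\ref{thm:harary}. Give weight $y_B$ both to $I_B=\{(v,h_B(v))\}$ and to $\iota(I_B)$. A state $(v,s)$ lies in exactly one of these two sets whenever $v\in B$. Its coverage is therefore $\sum_{B\ni v}y_B\geq1$, and the total weight is $2\sum_B y_B$. This gives $\chi_f(C(\Sigma))\leq2\chifb(\Sigma)$.

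\emph{Lower bound.} Start from an optimal fractional colouring $(w_I)$ of $C(\Sigma)$. Put $y_B=\tfrac12\sum_{I:\pi(I)=B}w_I$, where $\pi(I)$ is the projection of $I$. Each $I$ meets the fibre $\{(v,+),(v,-)\}$ in at most one state, and it does so exactly when $v\in\pi(I)$. Summing the coverage of $(v,+)$ and of $(v,-)$ therefore gives
\[
  2\le\sum_{I\ni(v,+)}w_I+\sum_{I\ni(v,-)}w_I=\sum_{I:\,v\in\pi(I)}w_I=2\sum_{B\ni v}y_B .
\]
So $(y_B)$ is a feasible fractional balanced colouring, and its value is $\tfrac12\sum_I w_I$. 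This gives $\chifb(\Sigma)\le\tfrac12\chi_f(C(\Sigma))$. Note that the lower bound does not need the symmetrisation under $\iota$; averaging over the fibre already suffices.

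The only step needing care is the bookkeeping in the lower bound. Several independent sets can share the same projection $B$, and their weights must be aggregated correctly; the double count over the two states above $v$ handles this. Loops and digons are harmless. The vertical edge forces at most one state per fibre in any independent set, and a digon makes $(u,s)$ adjacent to both states above $v$, which only shrinks the family of independent sets consistently with balance. Both facts are already encoded in Proposition~\ref{prop:balanced-independent}.
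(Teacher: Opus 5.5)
Your proof is correct and follows the paper's argument essentially verbatim. The paper also projects a fractional colouring of $C(\Sigma)$ via $y_B=\tfrac12\sum_{I:\pi(I)=B}w_I$, and conversely lifts each balanced set to the two independent sets given by a Harary labeling $h_B$ and its negative $-h_B$. Your fibre-counting identity only spells out the step the paper summarizes as ``the two states above each vertex together receive weight at least $2$.''
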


\begin{proof}
Let $(w_I)$ be a fractional colouring of $C(\Sigma)$. By Proposition~\ref{prop:balanced-independent}, the projection of every independent set is balanced. Assign to a balanced set $B$ the weight
\[
  y_B=\frac12\sum_{I:\pi(I)=B}w_I.
\]
The two states above each vertex together receive weight at least $2$, so $(y_B)$ is a fractional balanced colouring of half the total weight.

Conversely, for each balanced set $B$ choose a Harary labeling $h_B$. The two sets
\[
  I_B^+=\{(v,h_B(v)):v\in B\},
  \qquad
  I_B^-=\{(v,-h_B(v)):v\in B\}
\]
are independent. Assigning the weight of $B$ to each of them gives a fractional colouring of $C(\Sigma)$ of twice the total weight.
\end{proof}


\section{The theta parameter and the conflict graph}
\label{sec:equivalence}

We now identify the signed parameter with an ordinary theta number. The proof is the block version of the SDP symmetry encoded by the reflection.

\begin{theorem}
\label{thm:main-equivalence}
For every nonempty signed graph $\Sigma$,
\[
  \tb(\Sigma)=\frac12\tbar(C(\Sigma)).
\]
\end{theorem}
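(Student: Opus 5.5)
We prove the identity by matching feasible solutions of the primal $\mathrm P_\Sigma$ of Theorem~\ref{thm:signed-theta-sdp} with feasible solutions of the primal \eqref{eq:tbar-primal} for $C(\Sigma)$, with $\lambda=2t$ in both directions. The guiding idea is the block decomposition induced by the fibre involution $\iota$ of \eqref{involution}. Order the vertices of $C(\Sigma)$ as $V\times\{+\}$ followed by $V\times\{-\}$. A matrix $M$ on $C(\Sigma)$ that commutes with the permutation induced by $\iota$ has the block form
\[
  M=\begin{pmatrix}A&B\\ B&A\end{pmatrix}.
\]
It is orthogonally similar to $(A+B)\oplus(A-B)$. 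So we will set $X=\tfrac12(A+B)$ and $Y=\tfrac12(A-B)$, or conversely $A=X+Y$ and $B=X-Y$.

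\textbf{Upper bound, $\tfrac12\tbar(C(\Sigma))\le\tb(\Sigma)$.} Start from $(t,X,Y)$ feasible for $\mathrm P_\Sigma$ and define $M$ with $A=X+Y$ and $B=X-Y$. Then $M\succeq0$, and $M_{(v,s)(v,s)}=2t-1=\lambda-1$ with $\lambda=2t$. Next we check the edge constraints using \eqref{eq:conflict-adjacency}.
\begin{itemize}
\item A negative edge $uv$ gives conflict edges $(u,s)(v,s)$, with entry $A_{uv}=X_{uv}+Y_{uv}=X_{uv}-\sigma Y_{uv}=-1$.
\item A positive edge gives conflict edges $(u,s)(v,-s)$, with entry $B_{uv}=X_{uv}-Y_{uv}=-1$.
\item The vertical edge $(v,+)(v,-)$ gives $B_{vv}=X_{vv}-Y_{vv}=-1$.
\end{itemize}
So $M$ is feasible for \eqref{eq:tbar-primal} with value $2t$. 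One degenerate case must be checked: if $C(\Sigma)$ were edgeless, then $\tbar=1$. This cannot happen, because of the positive loops, so $C(\Sigma)$ always has an edge.

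\textbf{Lower bound, $\tb(\Sigma)\le\tfrac12\tbar(C(\Sigma))$.} Take an optimal $M$ for $C(\Sigma)$ with value $\lambda$. Since $\iota$ is an automorphism of $C(\Sigma)$ and the feasible set of \eqref{eq:tbar-primal} is convex and invariant under conjugation by automorphisms, the average $\tfrac12(M+P_\iota MP_\iota^{\mathsf T})$ is again feasible with the same $\lambda$. So we may assume $M$ has the block form above. Define $X=\tfrac12(A+B)$ and $Y=\tfrac12(A-B)$; both are positive semidefinite by the similarity above. The diagonal constraints of $\mathrm P_\Sigma$ come from two facts. First, the vertical edge forces $B_{vv}=-1$. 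Second, $A_{vv}=\lambda-1$. Together these give $X_{vv}=\tfrac{\lambda}{2}-1=t-1$ and $Y_{vv}=\tfrac{\lambda}{2}=t$ with $t=\lambda/2$. The edge constraints of $\mathrm P_\Sigma$ follow by reading the computation of the upper bound backwards. Note that for a digon both $A_{uv}$ and $B_{uv}$ are fixed, and this matches imposing both signed constraints. Therefore $\tb(\Sigma)\le\lambda/2$.

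The main obstacle is the symmetrization step together with the bookkeeping of the loop and vertical edges. The symmetrization needs care because $P_\iota MP_\iota^{\mathsf T}$ must preserve the edge pattern; this holds since $\iota$ is an automorphism. The loop bookkeeping matters because it is exactly the vertical edges that produce the diagonal shift between $t-1$ and $t$. We also rely on Theorem~\ref{thm:signed-theta-sdp} to identify $\tb(\Sigma)$ with the SDP value, and on attainment (Theorem~\ref{thm:signed-theta-dual}, and the standard attainment for \eqref{eq:tbar-primal}). Even without attainment, the infima correspond directly.
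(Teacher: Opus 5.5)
Your proposal is correct and follows the paper's proof essentially verbatim. The forward direction builds the block matrix with diagonal blocks $X+Y$ and off-diagonal blocks $X-Y$, which is orthogonally similar to $2X\oplus 2Y$; the converse symmetrizes a feasible matrix under the fibre involution and sets $X=\tfrac12(A+B)$, $Y=\tfrac12(A-B)$, with the vertical edges supplying the diagonal shift between $t-1$ and $t$. Your remarks that $C(\Sigma)$ always has an edge and that attainment is not needed are harmless refinements the paper leaves implicit.
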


\begin{proof}
Let $(t,X,Y)$ be feasible for $\mathrm P_\Sigma$, and order the vertices of $C(\Sigma)$ by the two fibres. Define
\[
  M=
  \begin{pmatrix}
    X+Y&X-Y\\
    X-Y&X+Y
  \end{pmatrix}.
\]
The orthogonal change of basis
\[
  U=\frac1{\sqrt2}
  \begin{pmatrix}I&I\\I&-I\end{pmatrix}
\]
gives $U^{\mathsf T}MU=2X\oplus2Y$, so $M\succeq0$. Its diagonal entries are $2t-1$, its vertical entries are $-1$, and the remaining conflict edges have entry $-1$ by the signed constraints in $(\mathrm P_\Sigma)$. Thus $M$ is feasible for~\eqref{eq:tbar-primal} on $C(\Sigma)$ with value $2t$.

Conversely, average any feasible matrix $M$ for~\eqref{eq:tbar-primal} over the fibre involution, that is, if $P$ is the permutation matrix defining the fibre involution \eqref{involution}, replace $M$ by $\widehat M = (M+PMP)/2$. Note that it is still feasible for \eqref{eq:tbar-primal}. It then has the form
\[
  \widehat M=\begin{pmatrix}K&L\\L&K\end{pmatrix}.
\]
Set
\[
  X=\frac{K+L}{2},
  \qquad
  Y=\frac{K-L}{2}.
\]
Block diagonalization gives $X,Y\succeq0$. If the theta value is $\lambda$, the diagonal and vertical constraints give $X_{vv}=\lambda/2-1$ and $Y_{vv}=\lambda/2$, while the same-fibre and cross-fibre edge constraints give the two signed equations of $\mathrm P_\Sigma$. Hence $t=\lambda/2$ is feasible in $(\mathrm P_\Sigma)$.
\end{proof}

\begin{corollary}
\label{cor:monotonicity}
The value $\tb(\Sigma)$ depends only on the switching class of $\Sigma$. Moreover,
\[
  \Sigma\longrightarrow\Pi
  \quad\Longrightarrow\quad
  \tb(\Sigma)\leq\tb(\Pi).
\]
\end{corollary}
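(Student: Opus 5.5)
The plan is to derive both assertions from Theorem~\ref{thm:main-equivalence}, which turns statements about $\tb$ into statements about $\tbar$ of the conflict graph. Ordinary $\tbar$ is monotone under graph homomorphisms by Proposition~\ref{thm:strict-vector}, and Section~\ref{sec:cover} records how switchings and signed homomorphisms act on $C(\cdot)$.

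For switching invariance, let $\tau$ be a switching function. As noted after \eqref{involution}, the map $(v,s)\mapsto(v,\tau(v)s)$ is a graph isomorphism $C(\Sigma)\to C(\Sigma^\tau)$. I would check this directly from \eqref{eq:conflict-adjacency}: $r=-\sigma(e)s$ holds exactly when $\tau(v)r=-\sigma^\tau(e)\tau(u)s$, and loops are unaffected. Since $\tbar$ is an isomorphism invariant, Theorem~\ref{thm:main-equivalence} gives $\tb(\Sigma)=\tb(\Sigma^\tau)$. Alternatively, one can argue at the SDP level: conjugating $X,Y$ in $\mathrm P_\Sigma$ by $\Diag(\tau)$ maps feasible solutions to feasible solutions for the switched signature. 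This is the matrix form of Proposition~\ref{prop:absorb-switching}.

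For monotonicity, let $f:\Sigma\to\Pi$ with switching $\tau$. By the previous step we may replace $\Sigma$ by $\Sigma^\tau$, so $f$ maps every edge to an edge of the same sign. Define $F(v,s)=(f(v),s)$. Take an edge $e=uv$ of $\Sigma$ with $r=-\sigma(e)s$. Then $\Pi$ has an edge $e'$ between $f(u)$ and $f(v)$ with $\pi(e')=\sigma(e)$, so $F(u,s)\sim F(v,r)$ in $C(\Pi)$. Loops are handled as well: the positive loop at $v$ maps to a positive edge at $f(v)$, which must be the positive loop there. Hence the vertical edge maps to the vertical edge.

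The main obstacle is the degenerate case in which two vertices with the same image are joined by an edge. Ordinary homomorphisms cannot send adjacent vertices to one vertex, but here that never happens. An edge of $C(\Sigma)$ between $(u,s)$ and $(v,r)$ with $f(u)=f(v)$ comes from an edge of $\Sigma$ mapped to a loop at $f(u)$. That loop is positive, since $\Pi$ has no negative loops, so $r=-s$, and the images $(f(u),s)$ and $(f(u),-s)$ are distinct and adjacent. Thus $F$ is a genuine homomorphism $C(\Sigma)\to C(\Pi)$.

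Monotonicity of $\tbar$ under homomorphisms (Proposition~\ref{thm:strict-vector}) then gives $\tbar(C(\Sigma))\le\tbar(C(\Pi))$, and halving via Theorem~\ref{thm:main-equivalence} finishes the proof. Every conflict graph contains vertical edges, so the edgeless convention for $\tbar$ never arises.
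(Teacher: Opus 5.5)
Your argument is correct and follows the paper's proof, filling in the details the paper leaves implicit: the isomorphism $(v,s)\mapsto(v,\tau(v)s)$ of conflict graphs, the induced map $F(v,s)=(f(v),s)$ once the switching is absorbed, monotonicity of $\tbar$, and Theorem~\ref{thm:main-equivalence}. Your check that edges sent to loops land on vertical edges is exactly what makes $F$ a homomorphism into a loopless graph.

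One correction concerns your optional SDP remark. Conjugating both $X$ and $Y$ by $\Diag(\tau)$ breaks the edge constraints. The entry $X_{uv}$ picks up the factor $\tau(u)\tau(v)$, while $\sigma^\tau(uv)\tau(u)\tau(v)Y_{uv}=\sigma(uv)Y_{uv}$ does not. The correct map is $(X,Y)\mapsto(X,\Diag(\tau)Y\Diag(\tau))$. This reflects that $R$ fixes the $\cH_+$-component, which generates $X$, and negates the $\cH_-$-component, which generates $Y$.
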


\begin{proof}
Switching gives an isomorphic conflict graph, and a signed homomorphism induces a homomorphism between conflict graphs. The result follows from Theorem~\ref{thm:main-equivalence} and homomorphism monotonicity of $\tbar$.
\end{proof}

\begin{corollary}
\label{cor:signed-sandwich}
For every nonempty signed graph $\Sigma$,
\[
  \frac12\omega(C(\Sigma))
  \leq\tb(\Sigma)
  \leq\chifb(\Sigma)
  \leq\chib(\Sigma).
\]
If $\Sigma$ is simple, the first term is $\frac12\max\{2,\omega_{\mathrm{ab}}(\Sigma)\}$.
\end{corollary}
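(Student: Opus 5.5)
The plan is to transport the classical sandwich chain for $C(\Sigma)$ through the identities already proved. By Theorem~\ref{thm:main-equivalence}, $\tb(\Sigma)=\frac12\tbar(C(\Sigma))$. The sandwich theorem for ordinary graphs gives
\[
  \omega(C(\Sigma))\leq\tbar(C(\Sigma))\leq\chi_f(C(\Sigma)).
\]
Dividing by $2$ gives $\frac12\omega(C(\Sigma))\leq\tb(\Sigma)\leq\frac12\chi_f(C(\Sigma))$. Proposition~\ref{prop:fractional-identity} then identifies the right-hand side with $\chifb(\Sigma)$.

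The last inequality $\chifb(\Sigma)\leq\chib(\Sigma)$ is the usual integrality relaxation. A balanced $p$-colouring $B_1,\dots,B_p$ yields the feasible solution $y_{B_i}=1$, with all other weights zero, of value $p$. For the final sentence, when $\Sigma$ is simple we substitute Proposition~\ref{prop:conflict-cliques}, namely $\omega(C(\Sigma))=\max\{2,\omega_{\mathrm{ab}}(\Sigma)\}$.

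The only points needing care are hypotheses. The sandwich inequality $\omega\leq\tbar$ was stated for graphs with an edge. Because positive loops are adjoined, $C(\Sigma)$ always contains the vertical edges $(v,+)(v,-)$, so it has an edge whenever $\Sigma$ is nonempty, and the primal \eqref{eq:tbar-primal} applies. I expect no real obstacle; the mild subtlety is that $\omega(C(\Sigma))\geq2$ holds automatically for the same reason, which is consistent with $\tb(\Sigma)\geq1$.
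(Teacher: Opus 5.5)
Your proposal is correct and follows the paper's own proof. You divide the ordinary sandwich chain for $C(\Sigma)$ by two via Theorem~\ref{thm:main-equivalence}, convert $\frac12\chi_f(C(\Sigma))$ into $\chifb(\Sigma)$ with Proposition~\ref{prop:fractional-identity}, and use Proposition~\ref{prop:conflict-cliques} for the simple case. Your direct integrality argument for $\chifb(\Sigma)\leq\chib(\Sigma)$ makes explicit a step the paper leaves implicit: halving $\chi_f\leq\chi$ for $C(\Sigma)$ only gives $\chifb(\Sigma)\leq\frac12\chi(C(\Sigma))$.
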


\begin{proof}
Divide the ordinary sandwich theorem for $C(\Sigma)$ by two and use Proposition~\ref{prop:fractional-identity}. The simple signed graph statement follows from Proposition~\ref{prop:conflict-cliques}.
\end{proof}

\begin{corollary}
\label{cor:normalizations}
For nonempty signed graphs, the following statements hold.
\begin{enumerate}[(a)]
  \item $\tb(\Sigma)=1$ if and only if $\Sigma$ is balanced.
  \item $\tb(\cB_p)=p$ for every integer $p\geq1$.
  \item If $(G,\pm)$ is obtained from an ordinary graph $G$ by replacing every
  edge by a positive--negative digon, then
  \[
    \tb(G,\pm)=\tbar(G).
  \]
  \item For disjoint unions,
  \[
    \tb(\Sigma\mathbin{\dot\cup}\Pi)
    =\max\{\tb(\Sigma),\tb(\Pi)\}.
  \]
\end{enumerate}
\end{corollary}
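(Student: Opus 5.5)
Each of the four items will be derived from Theorem~\ref{thm:main-equivalence}, $\tb(\Sigma)=\frac12\tbar(C(\Sigma))$, together with the sandwich Corollary~\ref{cor:signed-sandwich} and standard facts about $\tbar$.

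For (a), suppose $\Sigma$ is balanced. Then $\chib(\Sigma)=1$, and Proposition~\ref{prop:Bp-embedding} gives $\tb(\Sigma)\le 1$; since $t\ge1$ always, this gives equality. Conversely, suppose $\tb(\Sigma)=1$. By Theorem~\ref{thm:main-equivalence}, $\tbar(C(\Sigma))=2$. I will use the standard fact that $\tbar(G)\ge 3$ whenever $G$ contains an odd cycle. This holds because an odd cycle $C_{2k+1}$ maps homomorphically into $G$, and $\tbar(C_{2k+1})>2$: with $q=2$, adjacent unit vectors would have to be antipodal, which is impossible around an odd cycle. Monotonicity of $\tbar$ then gives the claim. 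Hence $C(\Sigma)$ is bipartite. Take a bipartition class $I$ that contains exactly one state above each vertex. This is possible because of the vertical edges, and the fibre involution sends one class to the other on each component. Then $I$ is an independent transversal, so Proposition~\ref{prop:balanced-independent} makes $V(\Sigma)$ balanced. The only delicate point is choosing a consistent class on each component, and since vertical edges join opposite classes this is automatic.

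For (b), the upper bound is Proposition~\ref{prop:Bp-embedding}. For the lower bound I will use Corollary~\ref{cor:signed-sandwich}: $C(\cB_p)$ contains the clique $\{(i,+)\}\cup\ldots$. More directly, the negative edges between every pair $i,j$ give $(i,+)\sim(j,+)$, and the positive edges give $(i,+)\sim(j,-)$. So $\{(i,+):i\in[p]\}\cup\{(1,-)\}$ is a clique for $p\ge2$, but that gives only $p+1$. Instead I would show $\chifb(\cB_p)=p$, since every balanced set is a singleton (a digon is unbalanced). I will bound $\tbar(C(\cB_p))\ge 2p$ via the dual~$\mathrm D_\Sigma$: take $S=J/(2p)$ and $T=I/(2p)$. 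Since both signs occur on every pair, no off-diagonal constraint applies. The trace is $1$ and $\langle J,S\rangle=p$. This is the cleanest route and is the step to verify carefully.

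For (c), the conflict graph of $(G,\pm)$ has, for every edge $uv$ of $G$, all four edges between the fibres of $u$ and $v$, plus the vertical edges. So $C(G,\pm)=G\boxtimes K_2$ (lexicographic product $G[K_2]$), and $\tbar(G[K_2])=2\tbar(G)$ by the standard multiplicativity of $\tbar$ under substitution of complete graphs. Alternatively, I would argue directly on $\mathrm P_\Sigma$. Both signs at $uv$ force $X_{uv}=-1$ and $Y_{uv}=0$. Taking $Y=tI$ and $X$ feasible for~\eqref{eq:tbar-primal} with $\lambda=t$ shows equality.

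For (d), the feasible sets of $\mathrm P$ decouple: take block-diagonal $X,Y$ with a common $t$. Since feasibility is upward closed in $t$ (add $sI$ to both $X$ and $Y$), the optimum is the maximum of the two optima. Restricting to a component also shows the reverse inequality.
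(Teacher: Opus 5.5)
\textbf{Part (b) fails as written.} With $S=J/(2p)$ and $T=I/(2p)$ you do get $\Tr(S+T)=1$. But $\langle J,S\rangle=p^2/(2p)=p/2$, not $p$, so your certificate only proves $\tb(\cB_p)\ge p/2$.

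Since $T$ does not enter the objective of $\mathrm D_\Sigma$, any trace you put there is wasted. The correct certificate is $S=J/p$, $T=0$. It is feasible, because every pair of vertices of $\cB_p$ carries a digon, so no off-diagonal constraint applies. Its value is $p$.

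You also abandoned the clique route for a wrong reason. A negative edge $ij$ produces both $(i,+)\sim(j,+)$ and $(i,-)\sim(j,-)$. A positive edge produces both $(i,+)\sim(j,-)$ and $(i,-)\sim(j,+)$. Together with the vertical edges, this gives $C(\cB_p)=K_{2p}$. Theorem~\ref{thm:main-equivalence} then gives $\tb(\cB_p)=\frac12\tbar(K_{2p})=p$ at once; this is the paper's entire proof of (b). Your remark $\chifb(\cB_p)=p$ cannot help with the lower bound, since $\tb\le\chifb$ only bounds $\tb$ from above.

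\textbf{Part (a) rests on a false claim.} The ``standard fact'' that $\tbar(G)\ge 3$ whenever $G$ contains an odd cycle is false: $\tbar(C_5)=\sqrt5$, and $\tbar(C_{2k+1})\to 2$. What your justification actually shows is $\tbar(G)>2$ for non-bipartite $G$, and that is all you need. If $\lambda=2$ were feasible in \eqref{eq:tbar-primal}, the Gram vectors would be unit vectors with $x_u=-x_v$ on every edge, which is impossible around an odd cycle; and the minimum is attained.

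With that correction, (a) is sound. Your converse direction is the paper's argument. For the forward direction you use $\tb\le\chib=1$ instead of exhibiting the bipartition of $C(\Sigma)$, which works equally well.

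\textbf{Parts (c) and (d) are correct.} Your direct SDP argument in (c) is the paper's. The lexicographic-product route is also valid, but it leans on an external multiplicativity result. In (d), your block-diagonal decoupling uses the shift $(X+sI,\,Y+sI)$ to equalize $t$. This is an SDP-level version of the paper's observation that $C$ commutes with disjoint unions and that $\tbar$ of a disjoint union is the maximum.
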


\begin{proof}
The conflict graph is bipartite if and only if $\Sigma$ is balanced. Indeed, a Harary labeling $h$ gives the bipartition according to the sign of $s\,h(v)$ at the state $(v,s)$; conversely, the vertical edges force the two states in each fibre into opposite parts, and the resulting labels satisfy $\sigma(uv)=h(u)h(v)$. Since $C(\Sigma)$ contains a vertical edge, the strict vector formulation gives $\tbar(C(\Sigma))=2$ if and only if it is bipartite. This proves (a). For (b), $C(\cB_p)=K_{2p}$. For (c), both edge signs in $\mathrm P_\Sigma$ force $X_{uv}=-1$ and $Y_{uv}=0$ on every edge of $G$; thus the $X$-block is the ordinary program for $\tbar(G)$, while one may take $Y=tI$. Finally, conflict graphs commute with disjoint unions and $\tbar$ of a disjoint union is the maximum of the two values.
\end{proof}

\begin{example}
For the all-negative triangle $(K_3,-)$, the conflict graph is the triangular prism. Its clique number and chromatic number are both $3$, and hence
\[
  \tb(K_3,-)=\frac32.
\]
This gives a basic nonintegral example and agrees with its fractional balanced chromatic number.
\end{example}


\section{Balanced convex corners}
\label{sec:corners}

This section develops the signed analogue of
\begin{equation}
  \STAB(G)\subseteq\THbody(G)\subseteq\QSTAB(G). \label{sandwich}
\end{equation}
  
\subsection{The balanced induced subgraph polytope}

Let
\[
  \mathcal B(\Sigma)=\{B\subseteq V(\Sigma):\Sigma[B]\text{ is balanced}\}.
\]
The \emph{balanced induced subgraph polytope} was introduced by Barahona and Mahjoub~\cite{BarahonaMahjoub1989,BarahonaMahjoub1994}. We use the notation
\[
  \BSTAB(\Sigma)
  :=\conv\{\one_B:B\in\mathcal B(\Sigma)\}.
\]
Its support function is the maximum-weight balanced induced subgraph problem.

Define the linear map
\[
  p:\R^{V(\Sigma)\times\{\pm1\}}\longrightarrow\R^{V(\Sigma)},
  \qquad
  p(y)_v=y_{(v,+)}+y_{(v,-)}.
\]

\begin{lemma}
\label{lem:BSTAB-projection}
For every signed graph $\Sigma$,
\[
  \BSTAB(\Sigma)=p\bigl(\STAB(C(\Sigma))\bigr).
\]
\end{lemma}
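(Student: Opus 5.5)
The plan is to prove the two inclusions separately, using that $p$ is linear and that both sides are convex hulls. A linear image of a convex hull is the convex hull of the images. Hence
\[
  p\bigl(\STAB(C(\Sigma))\bigr)=\conv\{p(\one_I):I\text{ independent in }C(\Sigma)\}.
\]
It therefore suffices to compare the two finite generating sets $\{p(\one_I)\}$ and $\{\one_B : B\in\mathcal B(\Sigma)\}$.

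For the inclusion $\supseteq$, take a balanced set $B$. By Theorem~\ref{thm:harary} it has a Harary labeling $h:B\to\{\pm1\}$, and by Proposition~\ref{prop:balanced-independent} the set $I=\{(v,h(v)):v\in B\}$ is independent in $C(\Sigma)$. Since $I$ contains exactly one state above each $v\in B$ and none above other vertices, $p(\one_I)=\one_B$. So every generator of $\BSTAB(\Sigma)$ lies in the image.

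For the inclusion $\subseteq$, take an independent set $I$ of $C(\Sigma)$. The vertical edges $(v,+)(v,-)$, coming from the positive loops, guarantee that $I$ contains at most one state above each vertex. Hence $p(\one_I)=\one_{\pi(I)}$, and by Proposition~\ref{prop:balanced-independent} the set $\pi(I)$ is balanced. Thus every generator of the image lies in $\BSTAB(\Sigma)$, and taking convex hulls finishes the proof.

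The only delicate point is the use of the positive-loop convention. Without the vertical edges, an independent set could contain both states above a vertex, and then $p(\one_I)$ would have a coordinate equal to $2$. This is exactly why the paper replaces every signed graph by its positive-loop completion, and the proof will state explicitly where that convention is invoked. Apart from this, the argument is a direct bookkeeping consequence of Proposition~\ref{prop:balanced-independent}.
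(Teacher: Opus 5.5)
Your proof is correct and follows the same route as the paper. You check both inclusions on generators via Proposition~\ref{prop:balanced-independent}: an independent set projects to a balanced set, and a Harary labeling lifts a balanced set to an independent set. You then take convex hulls, and your explicit remarks on the linearity of $p$ and on the role of the vertical edges make precise what the paper leaves implicit.
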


\begin{proof}
By Proposition~\ref{prop:balanced-independent}, the projection of the incidence vector of an independent set is the incidence vector of a balanced set. Conversely, every balanced set has a Harary labeling and hence lifts to an independent set. Taking convex hulls proves the identity.
\end{proof}

\subsection{The signed theta and clique bodies}

We define
\begin{equation}
  \BTH(\Sigma):=p\bigl(\THbody(C(\Sigma))\bigr),
  \qquad
  \BQSTAB(\Sigma):=p\bigl(\QSTAB(C(\Sigma))\bigr).
\label{eq:signed-bodies}
\end{equation}
These are convex corners in $\R^{V(\Sigma)}$. For completeness, if $x=p(y)$ and $0\leq x'\leq x$, scale the two coordinates above each $v$ by $x'_v/x_v$ (with the evident convention when $x_v=0$). Lower-comprehensiveness of the lifted corner then shows that $x'$ belongs to its image. The three bodies satisfy
\begin{equation}
  \BSTAB(\Sigma)
  \subseteq\BTH(\Sigma)
  \subseteq\BQSTAB(\Sigma).
\label{eq:signed-body-sandwich}
\end{equation}
The first inclusion also follows from the direct formulation below.

\begin{theorem}
\label{thm:BTH-intrinsic}
A vector $x\in\R^{V(\Sigma)}$ belongs to $\BTH(\Sigma)$ if and only if there are matrices $M,N\in\mathbb S^{V(\Sigma)}$ such that
\begin{equation}
  \begin{pmatrix}1&x^{\mathsf T}\\x&M\end{pmatrix}\succeq0,
  \qquad
  N\succeq0,
\label{eq:BTH-psd}
\end{equation}
\begin{equation}
  M_{vv}=N_{vv}=x_v
  \qquad(v\in V(\Sigma)),
\label{eq:BTH-diag}
\end{equation}
and, for every signed edge $e=uv$,
\begin{equation}
  N_{uv}=\sigma(e)M_{uv}.
\label{eq:BTH-edge}
\end{equation}
If both signs occur between $u$ and $v$, then $M_{uv}=N_{uv}=0$.
\end{theorem}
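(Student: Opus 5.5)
The plan is to repeat the symmetrization argument from the proof of Theorem~\ref{thm:main-equivalence}, now applied to the lifted matrix in the definition~\eqref{eq:theta-body} of $\THbody(C(\Sigma))$. Index $\R^{V(\Sigma)\times\{\pm1\}}$ by the two fibres, and let $P$ be the permutation matrix of the fibre involution~\eqref{involution}. Suppose $y\in\THbody(C(\Sigma))$ with $p(y)=x$, and let $W$ be a certifying matrix. Since $\iota$ is an automorphism of $C(\Sigma)$, the pair $(Py,PWP)$ is also a certificate. Conjugating the bordered matrix by $1\oplus P$ preserves positive semidefiniteness, and the set of certificates is convex. Averaging therefore gives a certificate $(\hat y,\widehat W)$ with $\hat y=(x/2,x/2)$, because $p$ is $\iota$-invariant, and with
\[
  \widehat W=\begin{pmatrix}K&L\\L&K\end{pmatrix}.
\]

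Next I block-diagonalize with $1\oplus U$, where $U$ is the orthogonal matrix used earlier. We have $U^{\mathsf T}\hat y=(x/\sqrt2,0)$ and $U^{\mathsf T}\widehat WU=(K+L)\oplus(K-L)$. Hence the bordered matrix is positive semidefinite if and only if
\[
  \begin{pmatrix}1&x^{\mathsf T}/\sqrt2\\x/\sqrt2&K+L\end{pmatrix}\succeq0
  \quad\text{and}\quad K-L\succeq0.
\]
After scaling the last row and column of the first block by $\sqrt2$, this is exactly~\eqref{eq:BTH-psd} with
\[
  M=2(K+L),\qquad N=2(K-L).
\]

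The constraints then translate entrywise. The condition $\diag\widehat W=\hat y$ gives $K_{vv}=x_v/2$. The vertical edge $(v,+)(v,-)$ gives $L_{vv}=0$. Together these yield~\eqref{eq:BTH-diag}. For the edge constraints I use~\eqref{eq:conflict-adjacency}:
\begin{itemize}
  \item A negative edge $uv$ produces same-fibre conflicts, so $K_{uv}=0$. Then $M_{uv}=2L_{uv}=-N_{uv}$.
  \item A positive edge produces cross-fibre conflicts, so $L_{uv}=0$. Then $M_{uv}=N_{uv}$.
  \item A digon forces $K_{uv}=L_{uv}=0$, hence $M_{uv}=N_{uv}=0$.
\end{itemize}
This is~\eqref{eq:BTH-edge}.

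For the converse I reverse every step. Given $(x,M,N)$, set $K=(M+N)/4$, $L=(M-N)/4$ and $\hat y=(x/2,x/2)$. The same block decomposition shows that the bordered matrix is positive semidefinite. The diagonal and edge identities read backwards give $\diag\widehat W=\hat y$ and vanishing entries on every edge of $C(\Sigma)$, including the vertical ones, since $M_{vv}=N_{vv}$ gives $L_{vv}=0$. Hence $\hat y\in\THbody(C(\Sigma))$ and $p(\hat y)=x$.

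The only step needing care is the averaging. One must check that $\THbody(C(\Sigma))$ together with its certificate set is invariant under $\iota$, which holds because $\iota$ permutes the edge set of $C(\Sigma)$, and that it is convex. Everything else is bookkeeping of the factor $2$ and of which block, $K$ or $L$, each edge type constrains.
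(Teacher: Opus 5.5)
Your proposal is correct and follows the paper's proof essentially step for step. You symmetrize the certificate under the fibre involution, pass to even and odd fibre coordinates to obtain $M=2(K+L)$ and $N=2(K-L)$, and read off the diagonal conditions from the vertical edges and the signed edge relations from the same-fibre and cross-fibre conflicts. Your explicit inverse $K=(M+N)/4$, $L=(M-N)/4$, together with the $\sqrt2$ rescaling, spells out the ``reverse the construction'' step that the paper leaves implicit.
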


\begin{proof}
Let $y\in\THbody(C(\Sigma))$ project to $x$, and average a theta-body certificate under the fibre involution. We may assume
\[
  y=\frac12(x,x),
  \qquad
  Z=\begin{pmatrix}A&B\\B&A\end{pmatrix}.
\]
The vertical edges give $B_{vv}=0$, while $A_{vv}=x_v/2$. In the even and odd fibre coordinates, positive semidefiniteness is equivalent to
\[
  \begin{pmatrix}1&x^{\mathsf T}\\x&2(A+B)\end{pmatrix}\succeq0,
  \qquad
  2(A-B)\succeq0.
\]
Set $M=2(A+B)$ and $N=2(A-B)$. A positive edge makes the corresponding cross-fibre entry $B_{uv}$ vanish, and hence $N_{uv}=M_{uv}$; a negative edge makes $A_{uv}$ vanish, and hence $N_{uv}=-M_{uv}$. This gives \eqref{eq:BTH-psd}--\eqref{eq:BTH-edge}. Reversing the construction gives a symmetric theta-body lift, proving the converse.
\end{proof}

For simple signed graphs, the clique body has an intrinsic description. A set $K\subseteq V(\Sigma)$ is an \emph{antibalanced clique} when the underlying induced graph is complete and $\Sigma[K]$ is antibalanced.

\begin{proposition}
\label{prop:BQSTAB-intrinsic}
If $\Sigma$ is a simple signed graph, then
\[
  \BQSTAB(\Sigma)
  =\left\{x\in[0,1]^{V(\Sigma)}:
  x(K)\leq2\text{ for every antibalanced clique }K\right\}.
\]
\end{proposition}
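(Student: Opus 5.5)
The plan is to prove the two inclusions separately, using the clique classification of Proposition~\ref{prop:conflict-cliques}. For the direction $\supseteq$ I will use the symmetric lift $y=\tfrac12(x,x)$. For the direction $\subseteq$ I will sum the clique inequalities over two ``antipodal'' lifts of an antibalanced clique.

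First I list the cliques of $C(\Sigma)$. The proof of Proposition~\ref{prop:conflict-cliques} gives the following, using simplicity of $\Sigma$.
\begin{enumerate}[(i)]
  \item Singletons.
  \item Vertical pairs $\{(v,+),(v,-)\}$.
  \item Non-vertical edges $\{(u,s_u),(v,s_v)\}$ with $uv\in E(\Sigma)$ and $\sigma(uv)=-s_us_v$.
  \item Cliques of order at least three, which have the form $\{(v,s_v):v\in K\}$ for an antibalanced clique $K$ with $\sigma(uv)=-s_us_v$ on $K$.
\end{enumerate}
A clique containing both states above some vertex is a vertical pair, by that same proof. Note also that every singleton, and every edge $\{u,v\}$ of the simple signed graph $\Sigma$, is an antibalanced clique, since a single edge can always be switched to be negative. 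Hence types (i), (iii) and (iv) are exactly the lifts $L_s(K):=\{(v,s_v):v\in K\}$ of antibalanced cliques $K$ under labelings $s$ with $\sigma(uv)=-s_us_v$. Conversely, every such lift is a clique of $C(\Sigma)$ by \eqref{eq:conflict-adjacency}.

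For $\subseteq$, take $y\in\QSTAB(C(\Sigma))$ and set $x=p(y)$. Then $x\geq0$, and the vertical clique inequality gives $x_v\leq1$. Let $K$ be an antibalanced clique with labeling $s$. The negated labeling $-s$ satisfies the same relation $\sigma(uv)=-s_us_v$, so both $L_s(K)$ and $L_{-s}(K)$ are cliques of $C(\Sigma)$. Together they contain every state above $K$ exactly once. Therefore
\[
  x(K)=y(L_s(K))+y(L_{-s}(K))\leq2 .
\]

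For $\supseteq$, let $x\in[0,1]^{V(\Sigma)}$ satisfy $x(K)\leq2$ for all antibalanced cliques $K$, and put $y_{(v,\pm)}=x_v/2$. Then $p(y)=x$ and $y\geq0$. It remains to check each clique type. A vertical pair has weight $x_v\leq1$. Every other clique is some $L_s(K)$ by the classification above, and it has weight $x(K)/2\leq1$. So $y\in\QSTAB(C(\Sigma))$.

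The only delicate point is the completeness of the clique classification, in particular that no non-vertical clique contains both states of a fibre. This is exactly where simplicity is needed: with digons, a fibre pair could extend to a triangle. I would verify this first, following the argument of Proposition~\ref{prop:conflict-cliques}.
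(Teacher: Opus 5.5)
Your proof is correct and follows the paper's argument: the symmetric lift $y_{(v,\pm)}=x_v/2$, combined with the fact that, by simplicity, every clique of $C(\Sigma)$ is either a vertical pair or a lift of an antibalanced clique. The only difference is cosmetic. For the inclusion $\subseteq$ you add the clique inequalities of the two antipodal lifts $L_s(K)$ and $L_{-s}(K)$, whereas the paper first symmetrizes $y$ under the fibre involution; these are the same computation. You also treat the singleton and edge cliques explicitly, which the paper leaves implicit.
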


\begin{proof}
A point of the projected clique relaxation may be symmetrized, so it has the lift $y_{(v,+)}=y_{(v,-)}=x_v/2$. If $K$ is antibalanced, choose signs $s_v$ such that $\sigma(uv)=-s_u s_v$. Then $\{(v,s_v):v\in K\}$ is a clique of $C(\Sigma)$, giving $x(K)\leq2$. The vertical clique gives $x_v\leq1$.

Conversely, consider a clique $Q$ of $C(\Sigma)$. If it contains both states above one vertex, simplicity implies that $Q$ is exactly that vertical edge. Otherwise its projection is injective and, by Proposition~\ref{prop:conflict-cliques}, is an antibalanced clique. The stated inequalities therefore imply $y(Q)\leq1$ for every clique $Q$.
\end{proof}

\subsection{The scalar gauge and recovery of the ordinary hierarchy}

The signed theta parameter is the all-ones gauge of the body just defined. This is the body-level counterpart of Theorem~\ref{thm:main-equivalence}.

\begin{proposition}
\label{prop:gauge-BTH}
For every nonempty signed graph $\Sigma$,
\[
  \tb(\Sigma)=\gamma_{\BTH(\Sigma)}(\one).
\]
\end{proposition}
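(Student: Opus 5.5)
The plan is to reduce the statement to the ordinary gauge identity \eqref{eq:gauge-theta} on the conflict graph, together with Theorem~\ref{thm:main-equivalence}. Since $\BTH(\Sigma)=p(\THbody(C(\Sigma)))$, we have $\one\in\lambda\BTH(\Sigma)$ if and only if some $y\in\lambda\THbody(C(\Sigma))$ satisfies $p(y)=\one$. We must therefore show
\[
\gamma_{\BTH(\Sigma)}(\one)=\tfrac12\,\gamma_{\THbody(C(\Sigma))}(\one_{C}),
\]
where $\one_C$ is the all-ones vector on $V(\Sigma)\times\{\pm1\}$. The right-hand side equals $\frac12\tbar(C(\Sigma))=\tb(\Sigma)$ by \eqref{eq:gauge-theta} and Theorem~\ref{thm:main-equivalence}.

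For the inequality $\gamma_{\BTH}(\one)\le\frac12\gamma_{\THbody}(\one_C)$, note that $p(\tfrac12\one_C)=\one$. So if $\one_C\in\mu\THbody(C(\Sigma))$, then $\tfrac12\one_C\in\tfrac{\mu}{2}\THbody(C(\Sigma))$ projects to $\one$. This gives $\one\in\tfrac{\mu}{2}\BTH(\Sigma)$.

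For the reverse inequality, suppose $y\in\lambda\THbody(C(\Sigma))$ with $p(y)=\one$. The theta body is invariant under the fibre involution $\iota$, since $\iota$ is an automorphism of $C(\Sigma)$ and permuting a certificate preserves \eqref{eq:theta-body}. By convexity, $\hat y=\tfrac12(y+\iota y)$ also lies in $\lambda\THbody(C(\Sigma))$. We have $\hat y_{(v,\pm)}=\tfrac12(y_{(v,+)}+y_{(v,-)})=\tfrac12$, so $\hat y=\tfrac12\one_C$. Hence $\one_C\in 2\lambda\THbody(C(\Sigma))$, which gives $\gamma_{\THbody}(\one_C)\le 2\gamma_{\BTH}(\one)$. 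Both gauges are attained infima because the bodies are compact convex corners, so no limiting argument is needed.

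The main point needing care is this symmetrization step. The key observation is that the constraint $p(y)=\one$ pins down the symmetrized lift completely: any vector with $p(y)=\one$ averages to exactly $\frac12\one_C$.

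As an independent check, one can also argue intrinsically via Theorem~\ref{thm:BTH-intrinsic}. Take a certificate $(M,N)$ for $x=\one/\lambda$ and form $X=\lambda M-J$ (PSD by a Schur complement) and $Y=\lambda N$. After suitable rescaling these should give a feasible point of $\mathrm P_\Sigma$. Matching the constants $t-1$ and $t$ on the diagonals requires care, so I would rely on the projection argument above as the primary proof.
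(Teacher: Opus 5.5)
Your argument is correct and follows the paper's proof, with both inclusions written out: you identify $\one\in\lambda\BTH(\Sigma)$ with $\tfrac12\one\in\lambda\THbody(C(\Sigma))$ by projecting and averaging under the fibre involution, and then conclude with \eqref{eq:gauge-theta} and Theorem~\ref{thm:main-equivalence}. In your side remark the Schur complement gives $\lambda^2M-J\succeq0$, not $\lambda M-J\succeq0$ (the latter has zero diagonal and is generally not PSD); taking $X=\lambda^2M-J$ and $Y=\lambda^2N$ makes $t=\lambda$ feasible in $\mathrm P_\Sigma$ directly, so the intrinsic check does go through.
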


\begin{proof}
The condition $\one/\lambda\in\BTH(\Sigma)$ is equivalent, after averaging under the fibre involution, to
\[
  \frac{1}{2\lambda}\one\in\THbody(C(\Sigma)).
\]
Thus $2\lambda$ is feasible for the gauge of $\one$ in $\THbody(C(\Sigma))$. By~\eqref{eq:gauge-theta}, the minimum such value is $\tbar(C(\Sigma))$. The conclusion follows from Theorem~\ref{thm:main-equivalence}.
\end{proof}

The construction recovers the ordinary stable-set hierarchy.

\begin{theorem}
\label{thm:body-recovery}
Let $(G,\pm)$ be obtained from a simple graph $G$ by replacing every edge by a positive--negative digon. Then
\[
  \BSTAB(G,\pm)=\STAB(G),
  \qquad
  \BTH(G,\pm)=\THbody(G),
  \qquad
  \BQSTAB(G,\pm)=\QSTAB(G).
\]
\end{theorem}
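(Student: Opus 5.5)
The plan is to treat the three identities separately, using the intrinsic descriptions already available: Lemma~\ref{lem:BSTAB-projection} for $\BSTAB$, Theorem~\ref{thm:BTH-intrinsic} for $\BTH$, and a direct clique analysis of $C(G,\pm)$ for $\BQSTAB$ (Proposition~\ref{prop:BQSTAB-intrinsic} does not apply, since $(G,\pm)$ is not simple).

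For $\BSTAB$, first observe that a digon is a negative $2$-cycle. Hence a set $B$ is balanced in $(G,\pm)$ exactly when $B$ contains no edge of $G$. Conversely, a stable set of $G$ induces only positive loops, so it is balanced. Thus $\mathcal B(G,\pm)$ is exactly the family of stable sets of $G$, and the first identity follows by taking convex hulls.

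For $\BTH$, apply Theorem~\ref{thm:BTH-intrinsic}. Every edge of $G$ carries both signs, so its last clause forces $M_{uv}=N_{uv}=0$ on edges of $G$. Given $x\in\BTH(G,\pm)$, the pair $(x,M)$ is then exactly a certificate as in \eqref{eq:theta-body}, so $x\in\THbody(G)$. Conversely, given $x\in\THbody(G)$ with certificate $X$, take $M=X$ and $N=\Diag(x)$. Here $N\succeq0$ because $x=\diag X\geq0$, the diagonal constraints hold, and the edge constraints hold with $M_{uv}=N_{uv}=0$. Non-edges impose no constraint on either matrix. This gives the second identity.

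For $\BQSTAB$, describe the cliques of $C(G,\pm)$. A digon $uv$ makes $(u,s)$ adjacent to both $(v,+)$ and $(v,-)$, and the vertical edges are present. So $C(G,\pm)$ is the lexicographic product $G[K_2]$, in which each vertex is blown up into an adjacent pair. Its cliques are therefore contained in the sets $K\times\{\pm1\}$ with $K$ a clique of $G$.

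\begin{itemize}
\item If $y\in\QSTAB(C(G,\pm))$, apply the inequality to the clique $K\times\{\pm1\}$. This gives $p(y)(K)=y(K\times\{\pm1\})\leq1$, so $p(y)\in\QSTAB(G)$.
\item Conversely, for $x\in\QSTAB(G)$, the symmetric lift $y=\tfrac12(x,x)$ satisfies $y(Q)\leq x(\pi(Q))\leq1$ for every clique $Q$.
\end{itemize}

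The only step needing care is this last one. It requires recognising that digon edges produce complete bipartite connections between fibres, so the maximal cliques of the conflict graph are exactly the full lifts of cliques of $G$. Once that description is in hand, both inclusions are immediate.
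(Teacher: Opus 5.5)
Your proof is correct and follows the paper's argument essentially step for step. Balanced sets of $(G,\pm)$ are exactly the stable sets of $G$. For $\BTH$, the intrinsic description forces $M_{uv}=N_{uv}=0$ on digon edges, and $N=\Diag(x)$ gives the converse. For $\BQSTAB$, the clique inequalities over the full lifts $K\times\{\pm1\}$ and the symmetric lift $\tfrac12(x,x)$ give the two inclusions. Your identification of $C(G,\pm)$ as the lexicographic product $G[K_2]$ and your check that $\Diag(x)\succeq0$ just spell out what the paper leaves implicit.
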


\begin{proof}
A vertex set is balanced in $(G,\pm)$ if and only if it is stable in $G$, so the first equality is immediate. In Theorem~\ref{thm:BTH-intrinsic}, both sign constraints on an edge force $M_{uv}=N_{uv}=0$. Hence every point of $\BTH(G,\pm)$ has an ordinary theta-body certificate $M$. Conversely, from an ordinary certificate $M$ one obtains a signed certificate by taking $N=\Diag(x)$. This proves the second equality.

Finally, $C(G,\pm)$ is obtained by replacing every vertex of $G$ by a vertical $K_2$ and joining two fibres completely whenever the corresponding vertices are adjacent. Above every clique $K$ of $G$, all $2|K|$ states form a clique, so the projected inequalities include $x(K)\leq1$. Conversely, the symmetric lift $y_{(v,\pm)}=x_v/2$ satisfies every clique inequality whenever $x\in\QSTAB(G)$.
\end{proof}

\subsection{The all-negative specialization}
\label{sec:negative}

Let $-G$ denote the signed graph obtained by assigning sign $-1$ to every edge of a simple graph $G$, together with the usual positive loops. A signed subgraph of $-G$ is balanced precisely when its underlying graph is bipartite. Thus this specialization corresponds to the maximum $2$-colourable induced subgraph problem. Semidefinite relaxations of the maximum $k$-colourable induced subgraph problem, including the generalized theta number $\vartheta_k$, are studied in~\cite{KuryatnikovaSotirovVera,SinjorgoSotirov}; recent strengthened relaxations appear in~\cite{BarkelSotirov}.

Let
\[
  \BIP(G)=\conv\{\one_U:G[U]\text{ is bipartite}\},
\]
and let $\alpha_2(G)$ be the maximum cardinality of such a set $U$.

\begin{proposition}
\label{prop:all-negative}
For every simple graph $G$,
\begin{enumerate}[(a)]
  \item $\BSTAB(-G)=\BIP(G)$;
  \item $C(-G)=K_2\cart G$;
  \item
  \[
    \max\{\one^{\mathsf T}x:x\in\BTH(-G)\}
    =\vartheta(K_2\cart G);
  \]
  \item
  \[
    \BQSTAB(-G)
    =\{x\in[0,1]^{V(G)}:x(K)\leq2
      \text{ for every clique }K\text{ of }G\}.
  \]
\end{enumerate}
\end{proposition}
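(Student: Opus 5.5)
Each of the four parts reduces to a result already established for general signed graphs, so the plan is to specialize in the order (a), (b), (c), (d).

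For (a), the balanced sets of $-G$ are exactly the vertex sets inducing bipartite subgraphs of $G$. An all-negative cycle is positive precisely when its length is even, and a graph is bipartite iff it has no odd cycle. Hence $\mathcal B(-G)=\{U:G[U]\text{ bipartite}\}$, and taking convex hulls of incidence vectors gives $\BSTAB(-G)=\BIP(G)$.

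For (b), I read off the adjacency rule \eqref{eq:conflict-adjacency}.
\begin{itemize}
\item A negative edge $uv$ gives $(u,s)\sim(v,s)$ for both $s$. So each fibre layer $V\times\{s\}$ induces a copy of $G$, with no cross-layer edges between distinct vertices.
\item The positive loops contribute exactly the vertical edges $(v,+)(v,-)$.
\end{itemize}
These two edge families are precisely the edges of $K_2\cart G$ under the identification $(v,s)\leftrightarrow(s,v)$.

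For (c), I use the definition $\BTH(-G)=p(\THbody(C(-G)))$. The key observation is $\one^{\mathsf T}p(y)=\one^{\mathsf T}y$ for every $y$, so maximizing $\one^{\mathsf T}x$ over the projection equals maximizing $\one^{\mathsf T}y$ over $\THbody(C(-G))$. The latter is the standard value $\vartheta(C(-G))$ (\cite{GLS}), and by (b) this is $\vartheta(K_2\cart G)$.

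For (d), $-G$ is simple, so Proposition~\ref{prop:BQSTAB-intrinsic} applies. It remains to identify the antibalanced cliques of $-G$ with all cliques of $G$.
\begin{itemize}
\item Every clique of $G$ is all-negative in $-G$, hence trivially antibalanced.
\item Conversely, an antibalanced clique in the sense of the paper is a set inducing a complete underlying graph, which is exactly a clique of $G$.
\end{itemize}
Substituting gives the stated description, with the bound $x_v\leq1$ coming from the vertical cliques.

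The only step needing care is (c). The gauge/objective identity $\max\{\one^{\mathsf T}y:y\in\THbody(H)\}=\vartheta(H)$ is not among the preliminaries as stated; the paper only records the gauge form \eqref{eq:gauge-theta}. I would cite it as standard from \cite{GLS,KnuthSandwich}. Alternatively, I would derive it by noting that the theta body $\THbody(H)$ is the antiblocker of $\THbody(\overline H)$, which combined with \eqref{eq:gauge-theta} applied to $\overline H$ yields the maximum.
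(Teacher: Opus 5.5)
Your proposal is correct and follows the paper's proof essentially step for step. You get (a) from the parity of all-negative cycles and (b) by reading off the adjacency rule \eqref{eq:conflict-adjacency}. For (c) you use $\one^{\mathsf T}p(y)=\one^{\mathsf T}y$ together with the standard identity $\max\{\one^{\mathsf T}y:y\in\THbody(H)\}=\vartheta(H)$; your remark that this identity follows from \eqref{eq:gauge-theta} because $\THbody(H)$ is the antiblocker of $\THbody(\overline H)$ is a sound justification that the paper leaves implicit. For (d) you apply Proposition~\ref{prop:BQSTAB-intrinsic}, since the antibalanced cliques of $-G$ are exactly the cliques of $G$.
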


\begin{proof}
Part (a) is the balance characterization above. Negative edges produce same-fibre edges in the conflict graph, while the positive loops produce the vertical matching, which is exactly the Cartesian product in (b). For an ordinary graph $H$, maximizing $\one^{\mathsf T}x$ over $\THbody(H)$ gives $\vartheta(H)$. Since the objective is preserved by the projection $p$, (c) follows from the definition of $\BTH$. Finally, the antibalanced cliques of $-G$ are precisely the cliques of $G$, so (d) follows from Proposition~\ref{prop:BQSTAB-intrinsic}.
\end{proof}


\section{Balanced perfectness}
\label{sec:perfect}

For ordinary graphs, perfectness is equivalent to equality of the stable-set and clique relaxations, and also to polyhedrality of the theta body; see~\cite{GLS}. One of the key applications of the notion of perfectness is that for these graphs, both polytopes $\STAB(G)$ and $\QSTAB(G)$ can be dealt with efficiently, because the sandwich \eqref{sandwich} becomes an equality. This motivates the following signed analogue.

\begin{definition}
A signed graph $\Sigma$ is \emph{balanced-perfect} if
\[
  \BSTAB(\Sigma)=\BQSTAB(\Sigma)
\]
\end{definition}

Perfectness of the conflict graph is sufficient, but not necessary.

\begin{theorem}
\label{thm:perfectness-transfer}
If $C(\Sigma)$ is perfect, then $\Sigma$ is balanced-perfect. The converse fails in general.
\end{theorem}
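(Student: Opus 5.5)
The forward implication is a direct projection argument. If $C(\Sigma)$ is perfect, the classical characterization recalled at the start of this section gives $\STAB(C(\Sigma))=\QSTAB(C(\Sigma))$. Applying the linear map $p$ to both sides, Lemma~\ref{lem:BSTAB-projection} turns the left side into $\BSTAB(\Sigma)$, and definition~\eqref{eq:signed-bodies} turns the right side into $\BQSTAB(\Sigma)$. So $\Sigma$ is balanced-perfect, and nothing further is needed for this direction.

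For the failure of the converse, I would exhibit a small simple signed graph $\Sigma$ on four vertices $V=\{1,2,3,4\}$ such that $C(\Sigma)$ contains an induced $5$-cycle, while $\BSTAB(\Sigma)=\BQSTAB(\Sigma)$. First, how an induced $5$-cycle can sit in $C(\Sigma)$. An unbalanced $4$-cycle lifts to an $8$-cycle in the double cover; together with the vertical matching, a quadrilateral with one negative edge gives a Möbius-ladder-type graph. In that graph, a path of four consecutive cover states closed by one vertical edge is an induced $C_5$. So $C(\Sigma)$ is imperfect as soon as $\Sigma$ contains an unbalanced $4$-cycle whose lifted states are not joined by additional chords.

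The unbalanced $4$-cycle alone does not work: there $\BQSTAB=[0,1]^4$, whereas $\BSTAB$ is cut by $x(V)\le 3$. I would therefore add one diagonal, say $13$, with a sign chosen so that both triangles $123$ and $134$ are antibalanced. The diagonal must not introduce chords into the chosen induced $C_5$ of $C(\Sigma)$; this is checked directly with~\eqref{eq:conflict-adjacency}, choosing the $C_5$ through states above $2$ and $4$ where possible. The intended effect of the two antibalanced triangles is as follows. By Proposition~\ref{prop:BQSTAB-intrinsic}, $\BQSTAB(\Sigma)$ is $[0,1]^4$ cut by $x_1+x_2+x_3\le2$ and $x_1+x_3+x_4\le2$. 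The unbalanced sets are now exactly those containing one of the triangles $\{1,2,3\}$ or $\{1,3,4\}$, together with $V$ itself. I then check that the vertices of this polytope are $0/1$ vectors of balanced sets. Since the constraint matrix is a $\{0,1\}$ interval matrix in the ordering $2,1,3,4$ (the triangle rows are consecutive), it is totally unimodular, so the polytope is integral. Its integer points are precisely the incidence vectors of balanced sets, because $\one_V$ violates both inequalities. This gives $\BSTAB(\Sigma)=\BQSTAB(\Sigma)$.

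The main obstacle is the compatibility step. I have to choose the signs so that the two triangles are antibalanced (for example, the diagonal and one further edge of each triangle negative, with the signs of the quadrilateral adjusted). At the same time, the outer $4$-cycle must stay unbalanced, and some $C_5$ in $C(\Sigma)$ must remain induced despite the chords created by the diagonal $13$. I would first try the all-negative triangles sharing edge $13$. Then the outer cycle $1\,2\,3\,4$ has four negative edges and is balanced, so one of the triangles has to be re-signed, and its antibalance re-verified, by explicit case check. If no sign pattern reconciles the two requirements, I would fall back on a digon on one pair and redo the same explicit verification.
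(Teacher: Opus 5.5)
Your forward direction is correct and is essentially the paper's argument. Projecting $\STAB(C(\Sigma))=\QSTAB(C(\Sigma))$ through $p$, using Lemma~\ref{lem:BSTAB-projection} and \eqref{eq:signed-bodies}, is all that is needed. The paper phrases it through the induced subgraphs $\Sigma[U]$, but for the definition as stated $U=V$ suffices.

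The gap is in the counterexample, exactly at the compatibility step you flagged: the sign pattern you aim for does not exist.
\begin{itemize}
\item On the underlying graph ($K_4$ minus the edge $24$), the outer cycle $1\,2\,3\,4$ is the symmetric difference of the triangles $123$ and $134$. Its sign is therefore the product of the two triangle signs.
\item A triangle is antibalanced precisely when its sign is $-1$. Making both triangles antibalanced thus forces the outer $4$-cycle to be balanced, and no re-signing can change this.
\item Such a $\Sigma$ is switching equivalent to the all-negative diamond, so $C(\Sigma)\cong K_2\cart(K_4-e)$.
\item In a simple signed graph on four vertices, an induced $5$-cycle of $C(\Sigma)$ must use exactly one vertical edge. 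By pigeonhole some vertex contributes both of its states; these are adjacent, hence consecutive. Two such pairs would lie over a triangle and create a chord. By \eqref{eq:conflict-adjacency}, the $5$-cycle therefore projects onto an unbalanced $4$-cycle.
\item Here the only $4$-cycle is balanced, and in fact $K_2\cart(K_4-e)$ is perfect.
\end{itemize}
So your construction yields a balanced-perfect $\Sigma$ whose conflict graph is also perfect. The digon fallback is neither specified nor checked, so the second half of the theorem remains unproved.

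The missing observation is that the second antibalanced triangle is unnecessary. You wanted it only to cut off $\one_V$, but $V$ already contains the first triangle. Take triangle $123$ negative and triangle $134$ positive, for instance $\sigma(23)=-1$ and every other edge positive.
\begin{itemize}
\item The outer $4$-cycle is then automatically unbalanced.
\item The sets $\{1,3,4\}$, $\{1,2,4\}$ and $\{2,3,4\}$ are balanced, so every unbalanced set contains $\{1,2,3\}$. Hence $\BSTAB(\Sigma)=\{x\in[0,1]^4:x_1+x_2+x_3\le2\}$.
\item Since $\{1,2,3\}$ is the only antibalanced clique of order three, Proposition~\ref{prop:BQSTAB-intrinsic} gives the same description of $\BQSTAB(\Sigma)$.
\item The cycle $(1,+),(2,-),(2,+),(3,+),(4,-)$ is an induced $5$-cycle of $C(\Sigma)$. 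Its edges come from $12$, the vertical edge at $2$, the negative edge $23$, and the edges $34$ and $41$. The candidate chords $(1,+)(2,+)$, $(1,+)(3,+)$ and $(2,-)(3,+)$ are nonedges by \eqref{eq:conflict-adjacency}, and $24$ is not an edge.
\end{itemize}
Up to relabelling, this is exactly the paper's example: $K_4-cd$ with $bc$ the unique negative edge.
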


\begin{proof}
For every $U\subseteq V(\Sigma)$,
\[
  C(\Sigma[U])=C(\Sigma)[U\times\{\pm1\}].
\]
If $C(\Sigma)$ is perfect, then
\[
  \STAB(C(\Sigma[U]))
  =\THbody(C(\Sigma[U]))
  =\QSTAB(C(\Sigma[U])).
\]
Applying $p$ proves that $\Sigma$ is balanced-perfect.

For the failed converse, let $\Sigma$ have vertex set $\{a,b,c,d\}$ and underlying graph $K_4-cd$. Give $bc$ negative sign and every other edge positive sign. The triangle $abc$ is negative, the triangle $abd$ is positive, and every unbalanced induced vertex set contains $\{a,b,c\}$. Therefore
\[
  \BSTAB(\Sigma)
  =\{x\in[0,1]^4:x_a+x_b+x_c\leq2\}.
\]
The only antibalanced clique of order at least three is $\{a,b,c\}$, so Proposition~\ref{prop:BQSTAB-intrinsic} gives the same description for $\BQSTAB(\Sigma)$. Thus $\Sigma$ is balanced-perfect and
\[
  \BSTAB(\Sigma)=\BTH(\Sigma)=\BQSTAB(\Sigma).
\]

Nevertheless, $C(\Sigma)$ contains the induced cycle
\[
  (a,+),(c,-),(c,+),(b,+),(d,-),(a,+).
\]
The consecutive edges arise, respectively, from the positive edge $ac$, the vertical edge at $c$, the negative edge $bc$, and the positive edges $bd$ and $ad$. There are no chords: the same-fibre pairs above $ac$ and $ab$ are nonedges, the cross-fibre pair above $bc$ is a nonedge, and $cd$ is absent. Hence $C(\Sigma)$ contains an odd hole and is not perfect.
\end{proof}

The example isolates the information lost by the projection $p$. Signed induced subgraphs correspond to induced subgraphs of $C(\Sigma)$ containing whole fibres, whereas an odd hole in the conflict graph may use both states of some vertices and only one state of others. The projected body can therefore be exact even when the full theta body is not. Characterizing this phenomenon is a genuinely signed perfectness problem.


\section{Concluding remarks}

We introduced a balanced Lov\'asz theta parameter through homomorphisms to a signed sphere with an orthogonal involution. The two eigenspaces of the involution lead directly to a symmetric two-matrix primal--dual SDP pair. At the scalar level, the construction is exactly one half of the chromatic-side theta number of the conflict graph, generalizing the framework of several other signed parameters. 

The more substantial object is the projected theta body $\BTH(\Sigma)$. It lies in the original signed vertex space, relaxes the balanced induced subgraph polytope, has an intrinsic two-Gram description, and has $\tb(\Sigma)$ as its all-ones gauge. Together with $\BSTAB(\Sigma)$ and $\BQSTAB(\Sigma)$, it recovers the ordinary stable-set hierarchy on signed digon graphs. The all-negative specialization connects it to maximum induced bipartite subgraphs and to the Cartesian-product theta bound. The balanced-perfect example shows that projection can erase imperfectness of the conflict graph, so the signed body theory has structural questions not captured by ordinary perfectness alone.

We close with two concrete problems.

\begin{problem}
Characterize balanced-perfect signed graphs. In particular, find minimal signed obstructions to the equality $\BSTAB=\BQSTAB$, and determine natural classes on which $\BSTAB=\BTH$ is exact.
\end{problem}

\begin{problem}
Determine the scalar parameter and the full signed theta body for the signed Kneser and signed Schrijver graphs of~\cite{KuffnerEtAl}. A useful refinement may need to retain the ranks of the fixed and anti-fixed Gram blocks, since the scalar optimum forgets the projective information used in the signed Borsuk--Ulam arguments.
\end{problem}


\vspace{1cm}

\subsection*{Statement of AI use}

ChatGPT was used to help organize the literature, test algebraic formulations, and review the final version of this text. All mathematical claims, priority assessments, and final editorial decisions remain the responsibility of the author.

\subsection*{Funding}

The author acknowledges support from Fundação de Amparo à Pesquisa do Estado de Minas Gerais (FAPEMIG) and from Conselho Nacional de Desenvolvimento Científico e Tecnológico (CNPq).

\bibliographystyle{plain}
\bibliography{signedlovaszthetav2}

\end{document}